\documentclass{article}
\usepackage{graphicx,amsfonts,amsmath,mathrsfs,amssymb,amsthm,url,color}
 \usepackage{fancyhdr,indentfirst,bm,enumerate,natbib,dsfont,tikz}
  \usepackage[colorlinks=true,citecolor=blue]{hyperref}
\def\Fbar{ {\overline F}}         
 \def\oo{\infty}                   \def\d{\,\mathrm{d}}
                    
   \def\sg{{\sigma}}                 \def\ep{{\epsilon}}

  \newcommand{\Var}{\mathrm{Var}}
   
    \newcommand{\E}{\mathbb{E}}
     \newcommand{\R}{\mathbb{R}}
      
       \renewcommand{\P}{\mathbb{P}}
       \newcommand{\id}{\mathds{1}}
\renewcommand{\(}{\left (}
 \renewcommand{\)}{\right )}
  \renewcommand{\[}{\left [}
   \renewcommand{\]}{\right ]}
\renewcommand{\(}{\left (}
 \renewcommand{\)}{\right )}
\newtheorem{theorem}{Theorem}[section]
 \newtheorem{corollary}[theorem]{Corollary}
  \newtheorem{lemma}[theorem]{Lemma}
   \newtheorem{proposition}[theorem]{Proposition}
    
     \newtheorem{example}[theorem]{Example}

          \newtheorem{question}{Question}
\numberwithin{equation}{section}
 \numberwithin{theorem}{section}

\renewcommand{\cite}{\citet}
 \allowdisplaybreaks
  
   \addtolength{\textheight}{.5\baselineskip}
\textheight 23cm
 \textwidth 16.5cm
  \topmargin -1.2cm
   \oddsidemargin 0cm
    \evensidemargin 0cm
\def\dfrac{\displaystyle \frac}

\begin{document}

\title{Moment inequalities for higher-order (inverse) stochastic dominance }

\author{ Meng Guan\thanks{School of Management, University of Science and Technology of China, China.
Email: \url{gm123123@mail.ustc.edu.cn}}
\and Zhenfeng Zou\thanks{School of Public Affairs, University of Science and Technology of China,  China. Email: \url{zfzou@ustc.edu.cn}}
\and Taizhong Hu\thanks{School of Management, University of Science and Technology of China,  China.
Email: \url{thu@ustc.edu.cn}}          	}

\date{\today}

\maketitle

\begin{abstract}

Stochastic dominance has been studied extensively, particularly in the finance and economics literature. In this paper, we obtain two results. First, necessary conditions for higher-order inverse stochastic dominance are developed. These conditions, which involve moment inequalities of the minimum order statistics, are analogous to the ones obtained by \cite{Fish80b} for usual higher-order stochastic dominance. Second, we investigate how background risk variables influence usual higher-order stochastic dominance. The main result generalizes the ones in \cite{PST20} from the first-order and second-order stochastic dominance to the higher-order.

\medskip

\noindent \textbf{MSC2000 subject classification}: 60E15, 91B06, 91B30

\noindent \textbf{Keywords}: Stochastic dominance; Inverse stochastic dominance;  Moment inequalities; Asymptotes; Background risk; order statistics

\end{abstract}

\section{Introduction}

Stochastic dominance has been studied extensively in economics, finance, probability, and statistics, among other fields. The literature has been expanding quite rapidly.  We refer the reader to the monographs \cite{MS02}, \cite{SS07}, \cite{Levy16}, and \cite{Wha19} for comprehensive reviews of the stochastic dominance theory.

The \emph{first-order stochastic dominance} (FSD) and \emph{second-order stochastic dominance} (SSD) are the most popular stochastic dominance rules. To more finely characterize a decision-maker's risk behavior or risk preference, research on higher-order stochastic dominance has attracted significant attention. There are two types of higher-order stochastic dominance. The first type, denoted as $n$-SD, is defined based on $n$-th integrated distribution function. For two random variables $X$ and $Y$ with respective distributions $F_X$ and $F_Y$, $X$ is dominated by $Y$ in the sense of $n$-SD, denoted as $X\le_n Y$, if $F_X^{[n]}(x) \ge F_Y^{[n]}(x)$ for all $x\in\R$, where $n\ge 1$ and $F_X^{[n]}$ is the $n$-th integrated distribution function of $F_X$, as defined in \eqref{eq-251228}; see, for example, \cite{Rol76}, \cite{Fish76,Fish80a}, \cite{KHG94}, and \cite{OR01}. The second type, denoted as $n$-ISD, is defined based on $n$-th integrated quantile function. $X$ is dominated by $Y$ in the sense of $n$-ISD, denoted as $X\le_n^{-} Y$, if $F_X^{[-n]}(x) \le F_Y^{[-n]}(x)$ for all $x\in\R$, where $n\ge 1$ and $F_X^{[-n]}$ is the $n$-th integrated quantile function of $F_X$, as defined in \eqref{eq-251229}; see, for example, \cite{MS89}, \cite{WY98}, \cite{MMZ05}, and \cite{DC10}. The formal definitions of $n$-SD and $n$-ISD are given in Section \ref{sect-2} as well as their basic properties.

Exploring the sufficient and/or necessary conditions for higher-order stochastic dominance has always been a major focus of research in this field; see, for example, \cite{JH88a,JH88b,JH88c}, \cite{Fish80b}, \cite{This93}, \cite{CP98}, and \cite{WW25}. A large number of necessary conditions for stochastic dominance have been developed, among which two of the most famous necessary conditions are the moment inequalities typically satisfied by $n$-SD (Theorems \ref{th-181225} and \ref{th-181235}), as given by \cite{Fish80b}.

For $X, Y\in L^1$, if $X<_1 Y$, then $\E [X]< \E [Y]$. For $X, Y\in L^2$, if $X<_2 Y$ with $\E [X]=\E [Y]$, then $\E [X^2] > \E [Y^2]$ and, hence, $\Var(X) > \Var(Y)$. When we consider stochastic dominance between two risk variables $X$ and $Y$, we isolate $X$ and $Y$ from the influence of other unavoidable background risk variables in their environment. These background risks are often uncertainties in economics and finance. \cite{PST20} studied how the FSD and SSD are affected by background noise variables. They proved that (i) if $\E [X] < \E [Y]$ for $X, Y\in L^1$, then there exists a random variable $Z$, independent of $X$ and $Y$, such that $X+Z <_1 Y+Z$, and (ii) if $\E [X] =\E [Y]$ and $\E [X^2]> \E [Y^2]$ for $X, Y\in L^2$, then there exists a random variable $Z$, independent of $X$ and $Y$, such that $X+Z <_2 Y+Z$.

In this paper, we will further investigate the properties of $n$-SD and $n$-ISD for $n>2$. The main contributions of this paper are as follows:

\begin{itemize}
  \item[1.] Motivated by Theorems \ref{th-181225} and \ref{th-181235}, we establish moment inequalities of the minimum order statistics for $n$-ISD when $n>2$ (Theorems \ref{th-251226} and \ref{th-251227}).

  \item[2.] Parallel to the results mentioned above in \cite{PST20}, we investigate how background risk variables influence $n$-SD for $n>2$. The main result (Theorem \ref{th-251212}) demonstrates that background risk can be strong enough to reinforce the ordering of two variables in terms of $n$-SD, even when they are ranked based on their moments for $n> 2$.

  \item[3.] Alternative proofs of the moment inequalities (Theorems \ref{th-181225} and \ref{th-181235}) for $n$-SD are presented by placing greater emphasis on the idea of the upper and lower asymptotes of integrated distribution functions.
\end{itemize}

\noindent We believe the main results in this paper contribute to a better understanding of the meaning of the $n$-SD and $n$-ISD.

The paper is organized as follows. In Section \ref{sect-2}, we review the fundamental concepts of usual $n$-SD and $n$-ISD, where $n>2$, and present auxiliary results for these two types of stochastic dominance, aiming to derive the moment inequalities of the minimum order statistics for $n$-ISD in Section \ref{sect-3}. The usual $n$-SD under independent noise variable for $n>2$ is investigated in Section \ref{sect-4}. In Appendix \ref{sect-proofs}, we present alternative proofs of the moment inequalities for $n$-SD.

\section{Preliminaries}
\label{sect-2}

\subsection{The $n$-SD}

For $r\ge 0$, let $L^r$ be the set of all random variables on an atomless probability space $(\Omega, \mathscr{F}, \P)$ with finite $r$th moment. For $X\in L^0$ with distribution function $F_X$, set $F_X^{[1]}(x)=F_X(x)$, and define $n$-th integrated distribution function $F^{[n]}_X$ recursively by
\begin{equation}
 \label{eq-251228}
     F^{[n]}_X (x) = \int_{-\infty}^x F_X^{[n-1]} (t) \d t,\quad x\in \R,\ n\ge 2,
\end{equation}
which is also termed as higher-order cumulative function. It is well-known that \citep[see, for example,][Proposition 1]{OR01}
\begin{equation}
  \label{eq-181228-1}
    F_X^{[n]}(x)=\frac {1}{(n-1)!} \int^x_{-\oo} (x-y)^{n-1} \d F_X(y)= \frac {1}{(n-1)!}\, \E [(x-X)_+^{n-1}],\quad  x\in\R.
\end{equation}

Similar to the $n$-th integrated distribution function, we can define $n$-th integrated survival function $\widetilde{F}_X^{[n]}(x)$ as follows. Denote by $\widetilde{F}_X^{[1]}(x)=S_X(x)=1-F_X(x)$, the survival function of $X$, and define $n$-th integrated survival function $\widetilde{F}^{[n]}_X$ recursively by
$$
     \widetilde{F}^{[n]}_X (x) = \int_x^{\infty} \widetilde{F}_X^{[n-1]} (t) \d t,\quad x\in \R\ \hbox{and}\ n\ge 2.
$$
For $X\in L^{n-1}$, $n\ge 1$, a similar argument to the proof of Proposition 1 in \cite{OR01} yields that
\begin{equation}
 \label{eq-181228-2}
 \widetilde{F}_X^{[n]}(x)=\frac {1}{(n-1)!} \int_x^\oo (y-x)^{n-1} \d F_X(y)
    = \frac {1}{(n-1)!}\, \E [(X-x)_+^{n-1}],\quad  x\in\R.
\end{equation}
From \eqref{eq-181228-1} and \eqref{eq-181228-2}, $F_X^{[n]}(x)$ and $\widetilde{F}_X^{[n]}(x)$ are finite for any $x\in\R$ when $X\in L^{n-1}$ and $n\ge 1$.

We recall from \cite{Fish76,Fish80a} the notion of \emph{$n$-th degree stochastic dominance} ($n$-SD). For $X, Y\in L^{n-1}$ and $n\ge 1$, $X$ is said to be dominated by $Y$ in the sense of $n$-SD, denoted by $X\le_n Y$, if
\begin{equation}
  \label{eq-1201-1}
    F_X^{[n]}(x) \ge F_Y^{[n]}(x),\quad x\in\R.
\end{equation}
If $X\le_n Y$ and the strict inequality in \eqref{eq-1201-1} holds for at least one point $t_0\in\R$, we say $X$ is strictly dominated by $Y$, denoted by $X<_n Y$. That is, $X<_n Y$ if and only if $X\le_n Y$ but $Y\not\le_n X$. For $n=1$ and $2$, $n$-SD reduces to FSD and SSD, respectively. Clearly, $X\le_n Y$ implies $X\le_{n+1} Y$, and $X<_n Y$ implies $X<_{n+1} Y$.

The $n$-SD defined by \eqref{eq-1201-1} is just one formulation of higher-order stochastic dominance with reference interval $\R$. There is another formulation of higher-order stochastic dominance, which was initially introduced by \cite{Jean80} and has been widely adopted in decision theory \citep[see, for example,][]{EST09}. This formulation is applied to all distributions with bounded interval $[a, b]$.
This criterion requires that $F_X^{[n]}(x) \ge F_Y^{[n]}(x)$ for all $x\in [a,b]$, and $F_X^{[k]}(b) \ge F_Y^{[k]}(b)$ for $k=1, \ldots, n-1$. Regarding the differences between these two types of higher-order stochastic dominance and their characterizations based on expected utility for some sets of utility functions, please refer to \cite{WW25}. In this paper, we focus on the first formulation of higher-order stochastic dominance with reference interval $\R$.

\cite{Fish80b} proved Theorems \ref{th-181225} and \ref{th-181235} under the assumption that $X$ and $Y$ are nonnegative random variables. \cite{Obr84} proved Theorems \ref{th-181225} and \ref{th-181235} for general random variables by establishing a relationship between the behavior of $G^{[n]}(x)- F^{[n]}(x)$ for large $x$ and certain inequalities involving the moments of $F$ and $G$. We will present the proofs of these two theorems in Appendix \ref{sect-proofs}. Although our proof method is similar to that of \cite{Obr84}, we place greater emphasis on the idea of the upper and lower asymptotes of $F^{[n]}_X(x)$ and $F_Y^{[n]}(x)$.

\begin{theorem}
\label{th-181225}
{\rm \citep{Fish80b,Obr84}}.
Let $X$ and $Y$ be two random variables (unnecessarily nonnegative) such that $X\ge_n Y$ and
$X, Y\in L^{n-1}$ for $n\ge 2$. If $\E [X^j]=\E [Y^j]$ for $j=0,1,\ldots, k$, $0\le k<n-1$, then
\begin{equation}
  \label{eq-181228-8}
      (-1)^k\, \E [X^{k+1}] \ge (-1)^k\, \E [Y^{k+1}].
\end{equation}
\end{theorem}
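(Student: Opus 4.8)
The plan is to identify the polynomial asymptote of the higher-order cumulative function $F_X^{[n]}(x)$ as $x\to+\infty$ and to read the inequality \eqref{eq-181228-8} off its leading coefficient. First I would use \eqref{eq-181228-1} to write $F_X^{[n]}(x)=\frac{1}{(n-1)!}\,\E[(x-X)_+^{n-1}]$ and introduce
$$
q_X(x):=\frac{1}{(n-1)!}\,\E\bigl[(x-X)^{n-1}\bigr]=\frac{1}{(n-1)!}\sum_{j=0}^{n-1}\binom{n-1}{j}(-1)^j\,\E[X^j]\,x^{n-1-j},
$$
a well-defined polynomial in $x$ because $X\in L^{n-1}$, and similarly $q_Y$. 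The key elementary estimate is that $F_X^{[n]}(x)-q_X(x)\to 0$ as $x\to+\infty$: the difference $(x-X)_+^{n-1}-(x-X)^{n-1}$ vanishes on $\{X\le x\}$ and equals $(-1)^n(X-x)^{n-1}$ on $\{X>x\}$, whose absolute value is at most $|X|^{n-1}$ once $x\ge 0$; hence $\bigl|F_X^{[n]}(x)-q_X(x)\bigr|\le\frac{1}{(n-1)!}\,\E\bigl[|X|^{n-1}\id_{\{|X|>x\}}\bigr]\to 0$ by dominated convergence, and the same holds for $Y$. Thus $q_X$ and $q_Y$ are the upper asymptotes of $F_X^{[n]}$ and $F_Y^{[n]}$, respectively.

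Next, since $X\ge_n Y$ we have $F_Y^{[n]}(x)-F_X^{[n]}(x)\ge 0$ for every $x\in\R$, so the previous step yields $\liminf_{x\to+\infty}\bigl(q_Y(x)-q_X(x)\bigr)\ge 0$. On the other hand, the hypothesis $\E[X^j]=\E[Y^j]$ for $j=0,1,\ldots,k$ makes the coefficients of $x^{n-1},x^{n-2},\ldots,x^{n-1-k}$ in $q_Y-q_X$ all vanish, so that
$$
q_Y(x)-q_X(x)=\frac{1}{(n-1)!}\sum_{j=k+1}^{n-1}\binom{n-1}{j}(-1)^j\bigl(\E[Y^j]-\E[X^j]\bigr)\,x^{n-1-j},
$$
a polynomial all of whose coefficients of powers strictly above $x^{n-2-k}$ vanish (here $n-2-k\ge 0$ because $k<n-1$) and whose coefficient of $x^{n-2-k}$ equals $c:=\frac{(-1)^{k+1}}{(n-1)!}\binom{n-1}{k+1}\bigl(\E[Y^{k+1}]-\E[X^{k+1}]\bigr)$.

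Finally I would use the elementary fact that a real polynomial whose $\liminf$ at $+\infty$ is nonnegative cannot have a negative leading coefficient: if its degree is at least $1$ it would otherwise tend to $-\infty$, and if it is a constant it equals that coefficient. Applied to $q_Y-q_X$ this forces $c\ge 0$, i.e.\ $(-1)^{k+1}\bigl(\E[Y^{k+1}]-\E[X^{k+1}]\bigr)\ge 0$, which is precisely $(-1)^k\,\E[X^{k+1}]\ge(-1)^k\,\E[Y^{k+1}]$, as claimed in \eqref{eq-181228-8}. The only step requiring genuine care is the uniform tail estimate in the first paragraph, where the assumption $X,Y\in L^{n-1}$ is used both to make $q_X,q_Y$ well defined and to ensure the approximation error $F_X^{[n]}(x)-q_X(x)$ is $o(1)$ rather than merely $O(1)$; everything else is routine bookkeeping of signs and exponents.
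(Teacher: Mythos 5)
Your proof is correct and follows essentially the same route as the paper: identify the polynomial asymptote of $F^{[n]}$ at $+\infty$ (the paper's $C_n^X$ for even $n$ and $D_n^X$ for odd $n$, which coincide with your single $q_X(x)=\frac{1}{(n-1)!}\E[(x-X)^{n-1}]$) and compare leading coefficients after the first $k+1$ cancel. The only cosmetic differences are that you unify the odd/even parity cases into one formula and argue directly via the nonnegativity of the leading coefficient rather than by contradiction.
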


\begin{theorem}
\label{th-181235}
{\rm \citep{Fish80b,Obr84}}.
Let $X$ and $Y$ be two random variables (unnecessarily nonnegative) such that $X>_n Y$ and $X, Y\in L^n$, where $n\ge 1$. If $\E [X^k]=\E [Y^k]$ for $k=0,1,\ldots, n-1$, then
\begin{equation}
  \label{eq-181229-4}
       (-1)^{n-1} \E [X^n] > (-1)^{n-1}\E [Y^n].
\end{equation}
\end{theorem}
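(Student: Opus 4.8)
The plan is to extract the contribution of the $n$th moment via a single exact identity for $F_X^{[n]}-F_Y^{[n]}$, read off the sign of the corresponding integrated survival functions from the dominance, and then integrate once more; the case $n=1$ is the classical fact that strict FSD strictly orders the means, so I would assume $n\ge 2$. The starting point is the pointwise identity $(x-X)^{n-1}=(x-X)_+^{n-1}+(-1)^{n-1}(X-x)_+^{n-1}$ (the two one-sided powers live on complementary events, and both vanish when $X=x$ since $n-1\ge 1$). Taking expectations and using \eqref{eq-181228-1}--\eqref{eq-181228-2} gives, since $X\in L^{n-1}$,
\begin{equation*}
  \E\!\left[(x-X)^{n-1}\right]=(n-1)!\,F_X^{[n]}(x)+(-1)^{n-1}(n-1)!\,\widetilde F_X^{[n]}(x),\qquad x\in\R.
\end{equation*}
The left-hand side is a polynomial in $x$ of degree $n-1$ whose coefficients are built from $\E[X^0],\dots,\E[X^{n-1}]$; it is precisely the $x\to+\infty$ asymptote of $F_X^{[n]}$ (an upper asymptote when $n$ is odd, a lower one when $n$ is even, since $(-1)^{n-1}\widetilde F_X^{[n]}\ge 0$). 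Under the hypothesis $\E[X^j]=\E[Y^j]$ for $0\le j\le n-1$ this polynomial is common to $X$ and $Y$, so subtracting the identity for $Y$ from that for $X$ eliminates it and yields the exact relation
\begin{equation*}
  F_X^{[n]}(x)-F_Y^{[n]}(x)=(-1)^{n}\left(\widetilde F_X^{[n]}(x)-\widetilde F_Y^{[n]}(x)\right),\qquad x\in\R.
\end{equation*}

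Next I would bring in the dominance. By definition $X>_n Y$ means $F_X^{[n]}(x)\le F_Y^{[n]}(x)$ for every $x$, with $F_X^{[n]}(x_0)<F_Y^{[n]}(x_0)$ at some $x_0$. The identity above then forces $(-1)^{n-1}\bigl(\widetilde F_X^{[n]}(x)-\widetilde F_Y^{[n]}(x)\bigr)\ge 0$ for all $x$, with strict inequality at $x_0$; since $\widetilde F_X^{[n]}$ and $\widetilde F_Y^{[n]}$ are continuous for $n\ge 2$, the strict inequality survives on a set of positive Lebesgue measure. Consequently the quantity $(-1)^{n-1}\int_{\R}\bigl(\widetilde F_X^{[n]}(x)-\widetilde F_Y^{[n]}(x)\bigr)\,\d x$ is well defined and \emph{strictly positive}.

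It remains to identify this integral, and here I would use $X,Y\in L^n$. Since $\widetilde F_X^{[n+1]}$ is an antiderivative of $-\widetilde F_X^{[n]}$ and $\widetilde F_X^{[n+1]}(x)=\tfrac1{n!}\E[(X-x)_+^n]$, for $a<b$ we get
\begin{equation*}
  \int_a^b\!\left(\widetilde F_X^{[n]}-\widetilde F_Y^{[n]}\right)\d x=\tfrac1{n!}\Bigl(\E[(X-a)_+^n]-\E[(Y-a)_+^n]\Bigr)-\tfrac1{n!}\Bigl(\E[(X-b)_+^n]-\E[(Y-b)_+^n]\Bigr).
\end{equation*}
Letting $b\to+\infty$ sends the second bracket to $0$ by dominated convergence. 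Letting $a\to-\infty$, one replaces $(X-a)_+^n$ by $(X-a)^n$ up to an error bounded by $|X|^n\id_{\{X<a\}}\to 0$, and since $\E[(X-a)^n]-\E[(Y-a)^n]=\sum_{j=0}^n\binom nj(-a)^{n-j}\bigl(\E[X^j]-\E[Y^j]\bigr)=\E[X^n]-\E[Y^n]$ (all lower-order terms cancel by hypothesis), the first bracket tends to $\tfrac1{n!}\bigl(\E[X^n]-\E[Y^n]\bigr)$. Hence $\int_{\R}(\widetilde F_X^{[n]}-\widetilde F_Y^{[n]})\,\d x=\tfrac1{n!}(\E[X^n]-\E[Y^n])$, and combining with the previous paragraph gives $(-1)^{n-1}\bigl(\E[X^n]-\E[Y^n]\bigr)>0$, which is \eqref{eq-181229-4}.

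The delicate point is this last step: the individual integrals $\int_{\R}\widetilde F_X^{[n]}$ and $\int_{\R}\widetilde F_Y^{[n]}$ diverge, so one must work with the difference throughout and justify the passage to the limit at $-\infty$ using the \emph{full} strength of $X,Y\in L^n$ together with \emph{all} the equalities $\E[X^j]=\E[Y^j]$, $j\le n-1$; dropping either ingredient would leave only the non-strict conclusion of Theorem \ref{th-181225}. Everything else --- the algebraic identity, the cancellation of the polynomial asymptote, and the persistence of the strict inequality on a set of positive measure --- is routine.
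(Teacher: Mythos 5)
Your proof is correct, but it takes a genuinely different route from the paper's. The paper argues by contradiction one level up: integrating the strict gap once gives $F_Y^{[n+1]}(x)-F_X^{[n+1]}(x)\ge\alpha(x_1-x_0)>0$ for all large $x$, while the negation of \eqref{eq-181229-4} together with the moment equalities forces the right asymptote of $F_X^{[n+1]}$ ($D_{n+1}^X$ for even $n$, $C_{n+1}^X$ for odd $n$) to dominate that of $F_Y^{[n+1]}$, so the gap must eventually shrink below $\alpha(x_1-x_0)$ --- a contradiction. You instead work directly at level $n$: your identity $\E[(x-X)^{n-1}]=(n-1)!\bigl(F_X^{[n]}(x)+(-1)^{n-1}\widetilde F_X^{[n]}(x)\bigr)$ is exactly the paper's \eqref{eq-181228-4}/\eqref{eq-181228-6} for $C_n^X$ and $D_n^X$, but you use it to cancel the common moment polynomial and convert the dominance into the one-signed statement $(-1)^{n-1}\bigl(\widetilde F_X^{[n]}-\widetilde F_Y^{[n]}\bigr)\ge 0$, strict on a set of positive measure by continuity, and then evaluate $\int_{\R}\bigl(\widetilde F_X^{[n]}-\widetilde F_Y^{[n]}\bigr)\d x$ exactly as $\tfrac{1}{n!}\bigl(\E[X^n]-\E[Y^n]\bigr)$; your handling of the two improper limits (one-signedness of the integrand, dominated convergence at $b\to+\infty$, and the tail bound $\E\bigl[|X|^n\id_{\{X<a\}}\bigr]\to 0$ at $a\to-\infty$) is sound and does use the full strength of $X,Y\in L^n$. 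What your route buys is an exact identity rather than an asymptotic contradiction: your conclusion is equivalent to $\int_{\R}\bigl(F_Y^{[n]}(t)-F_X^{[n]}(t)\bigr)\d t=\tfrac{(-1)^n}{n!}\bigl(\E[Y^n]-\E[X^n]\bigr)$, which is precisely the computation at the heart of the paper's proof of Theorem \ref{th-251212}, so your argument makes explicit that Theorem \ref{th-181235} is an immediate corollary of that identity. The only blemish is the parenthetical remark that $(-1)^{n-1}\widetilde F_X^{[n]}\ge 0$, which holds only for odd $n$; since it plays no role in the argument, this does not affect correctness.
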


From Theorem \ref{th-181225}, it follows that $X\ge_n Y$ always implies $\E [X]\ge \E [Y]$ for $n\ge 1$. A short proof for $n=3$ was given by \cite{Schm05}. Theorem \ref{th-181235} implies that $(\E [X],\ldots, \E [X^n]) = (\E [Y],\ldots, \E [Y^n])$ can not be true when $X>_n Y$, provided that the moments involved are finite. Since $X \ge_m Y$ implies $X \ge_{m+1} Y$ for $m\ge 1$, an immediate consequence of Theorem \ref{th-181225} is the following corollary, which states that if $X$ stochastically dominates $Y$ in the non-strict sense for any finite degree and if $\E [X^k]=\E [Y^k]$ for $k=0,1,\ldots, n$, then $\E [X^{n+1}] \ge \E [Y^{n+1}]$ for even $n$ and $\E [X^{n+1}] \le \E [Y^{n+1}]$ for odd $n$.

\begin{corollary}
\label{co-181225}
Let $X$ and $Y$ be two random variables (unnecessarily nonnegative) such that $X\ge_m Y$ for $m\ge 1$. If $X, Y\in L^{n+1}$, $n\ge 0$, and $\E [X^k]=\E [Y^k]$ for $k=0,1,\ldots, n$, then
$$     (-1)^n\, \E [X^{n+1}] \ge (-1)^n\, \E [Y^{n+1}].    $$
\end{corollary}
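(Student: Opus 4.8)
The plan is to read Corollary~\ref{co-181225} off directly from Theorem~\ref{th-181225}: it is precisely the instance of that theorem in which the degree is taken to be $n+2$ and the matched-moment index ``$k$'' is taken to be our $n$. The only preliminary step is to upgrade the dominance to degree $n+2$. Since $X\le_m Y$ implies $X\le_{m+1} Y$ (noted in Section~\ref{sect-2}), and hence $X\ge_m Y$ implies $X\ge_{m+1} Y$, the hypothesis $X\ge_m Y$ for $m\ge 1$ yields in particular $X\ge_{n+2} Y$; note that $n+2\ge 2$ because $n\ge 0$, so $n+2$ is an admissible degree for Theorem~\ref{th-181225}.

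Next I would check the hypotheses of Theorem~\ref{th-181225} with its degree equal to $n+2$ and its ``$k$'' equal to $n$. First, $X\ge_{n+2} Y$ has just been obtained. Second, the integrability requirement there is $X,Y\in L^{(n+2)-1}=L^{n+1}$, which is exactly the standing assumption of the corollary. Third, one needs $0\le k<(n+2)-1$, i.e.\ $0\le n<n+1$, which always holds. Fourth, one needs $\E[X^j]=\E[Y^j]$ for $j=0,1,\ldots,k=n$, which is again a hypothesis of the corollary. Theorem~\ref{th-181225} then gives $(-1)^k\,\E[X^{k+1}]\ge(-1)^k\,\E[Y^{k+1}]$, i.e.\ $(-1)^n\,\E[X^{n+1}]\ge(-1)^n\,\E[Y^{n+1}]$, which is the assertion. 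For $n=0$ this specializes to $\E[X]\ge\E[Y]$ coming from degree-$2$ dominance.

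There is no genuine obstacle here; the substance is entirely contained in Theorem~\ref{th-181225} and the argument is bookkeeping. The one point that needs a moment's care is the choice of degree: it must be taken to be exactly $n+2$, not larger, so that the integrability condition $L^{(\text{degree})-1}$ of Theorem~\ref{th-181225} coincides with the available hypothesis $L^{n+1}$, while at the same time there is still room ($k=n<(\text{degree})-1$) to draw a conclusion about the $(n+1)$-st moment; this is exactly where monotonicity of $n$-SD in the degree is used. Finally, the sign $(-1)^n$ in the conclusion is inherited from the $(-1)^k$ of Theorem~\ref{th-181225} with $k=n$, so the inequality reads $\E[X^{n+1}]\ge\E[Y^{n+1}]$ for even $n$ and $\E[X^{n+1}]\le\E[Y^{n+1}]$ for odd $n$, matching the text preceding the corollary.
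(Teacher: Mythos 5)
Your argument is exactly the paper's: the corollary is presented there as an immediate consequence of Theorem \ref{th-181225} via the monotonicity of $n$-SD in the degree, and your instantiation (degree $n+2$, matched-moment index $k=n$) is the right one, with all hypotheses checked correctly. One small caveat: the monotonicity $X\ge_m Y \implies X\ge_{m+1}Y$ only \emph{raises} the degree, so your step ``$X\ge_m Y$ yields in particular $X\ge_{n+2}Y$'' is justified only when $m\le n+2$; if $m>n+2$ you cannot descend. In that case, apply Theorem \ref{th-181225} directly at degree $m$ with $k=n<m-1$, observing that the hypothesis $X\ge_m Y$ already presupposes $X,Y\in L^{m-1}\subseteq L^{n+1}$, so the integrability requirement at degree $m$ is automatic and your worry about needing more than $L^{n+1}$ does not arise there. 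With that one-line case distinction the proof is complete and coincides with the paper's.
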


\subsection{The $n$-ISD}

For $X\in L^0$ with distribution function $F_X$, the left-continuous inverse of $F_X$ is defined by
$$
   F_X^{-1}(p)=\inf\{x\in\R: F_X(x)\ge p\},\quad p\in (0,1],
$$
with $F_X^{-1}(0)=\inf\{x\in\R: F_X(x)>0\}$. Throughout, we work within the space $L^1$ of random variables. For $X\in L^1$, starting from $F_X^{[-1]}(p) = F_X^{-1}(p)$ we define $n$-th quantile function $F_X^{[-n]}$ recursively from
\begin{equation}
 \label{eq-251229}
   F_X^{[-n]}(p) =\int^p_0 F^{[-n+1]}_X (u) \d u, \quad p\in [0,1],\ n\ge 2.
\end{equation}

First, we recall from \cite{MS89} the \emph{$n$-th degree inverse stochastic dominance} ($n$-ISD). A random variable $X$ is said to be dominated by $Y$ in the sense of $n$-ISD, denoted by $X\le^{-}_n Y$, if
\begin{equation}
 \label{eq-251227}
    F_X^{[-n]} (p) \le F_Y^{[-n]} (p),\quad p\in (0,1),\ n\ge 1.
\end{equation}
If $X\le_n^{-} Y$ and the strict inequality in \eqref{eq-251227} holds for at least one point $p_0\in (0,1)$, we say $X$ is strictly dominated by $Y$ in the sense of $n$-ISD, denoted by $X<_n^{-} Y$.
It is known that $n$-ISD and $n$-SD are equivalent for $n=1, 2$. When $n\ge 3$, a counterexample was given by \cite{SF87} to show this equivalence does not hold. The difference between $n=1, 2$ and $n\ge 3$ is precisely one of the key points discussed in \cite{DC10}, which also provides an equivalent characterization of the $n$-ISD order in terms of the weak $n$-majorization of the vectors of mean order statistics. Here, the $n$-ISD is also termed in \cite{AHM21} as the \emph{$n$-th degree upward inverse stochastic dominance} (upward $n$-ISD).

Since $F_X^{[-n]}(p)$ and $F_Y^{[-n]}(p)$ recursively integrate the quantile functions over the interval $(0, p]$, the $n$-ISD compares the lowest part of the distributions. This is the reason why $n$-ISD has gained attention in the study of social inequality and social welfare, as the focus is on the strata within these research fields that have access to fewer resources. See, for example, \cite{Zoli99,Zoli02}, \cite{And18}, \cite{AHM21}, and \cite{JSH24} among references therein.

\begin{proposition}
 \label{pr-251220}
For $X \in L^1$ and $n\ge 2$, we have
\begin{equation}
  \label{eq-251220-1}
   F_X^{[-n]} (p) = \frac {1}{(n-2)!} \int^1_0 F_X^{-1}(u) (p-u)_+^{n-2}\d u, \quad p\in [0,1].
\end{equation}
If, additionally, $F_X$ is continuous, then
\begin{equation}
 \label{eq-251220-2}
    F_X^{[-n]} (p)  =\frac {1}{(n-2)!} \int_{\R} x \big (p-F_X(x)\big )_+^{n-2}\d F_X(x), \quad p\in [0,1].
\end{equation}
\end{proposition}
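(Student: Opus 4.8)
The plan is to prove \eqref{eq-251220-1} by induction on $n\ge 2$ — it is precisely an instance of Cauchy's formula for repeated integration — and then to derive \eqref{eq-251220-2} from it via the quantile transformation $u=F_X(x)$. For the base case $n=2$, the recursion \eqref{eq-251229} gives $F_X^{[-2]}(p)=\int_0^p F_X^{-1}(u)\d u$, which is exactly $\frac{1}{0!}\int_0^1 F_X^{-1}(u)(p-u)_+^0\d u$ with the convention $t_+^0=\id_{\{t>0\}}$. For the induction step, suppose \eqref{eq-251220-1} holds for some $n\ge 2$. Then by \eqref{eq-251229},
\begin{align*}
 F_X^{[-(n+1)]}(p)=\int_0^p F_X^{[-n]}(q)\d q
 =\frac{1}{(n-2)!}\int_0^p\(\int_0^1 F_X^{-1}(u)(q-u)_+^{n-2}\d u\)\d q .
\end{align*}
Since $p\le 1$ we have $0\le (q-u)_+^{n-2}\le 1$ on $[0,p]\times[0,1]$, and $\int_0^1 |F_X^{-1}(u)|\d u=\E|X|<\oo$ because $X\in L^1$; hence the double integral is absolutely convergent and Fubini's theorem justifies interchanging the order of integration. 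Using $\int_0^p (q-u)_+^{n-2}\d q=\frac{1}{n-1}(p-u)_+^{n-1}$ then yields
$$
 F_X^{[-(n+1)]}(p)=\frac{1}{(n-1)!}\int_0^1 F_X^{-1}(u)(p-u)_+^{n-1}\d u ,
$$
which is \eqref{eq-251220-1} with $n$ replaced by $n+1$, completing the induction.

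To obtain \eqref{eq-251220-2}, assume in addition that $F_X$ is continuous, and recall that if $U$ is uniform on $(0,1)$ then $F_X^{-1}(U)$ has distribution $F_X$, so that $\int_\R g(x)\d F_X(x)=\E[g(F_X^{-1}(U))]=\int_0^1 g\big(F_X^{-1}(u)\big)\d u$ for every $F_X$-integrable (or nonnegative) measurable $g$. Apply this with $g(x)=x\,(p-F_X(x))_+^{n-2}$; the integrand is $F_X$-integrable since $X\in L^1$ and $(p-F_X(x))_+^{n-2}\le 1$. Continuity of $F_X$ gives $F_X(F_X^{-1}(u))=u$ for every $u\in(0,1)$ (for such $u$, $F_X^{-1}(u)$ is finite, and continuity at that point forces equality in the general inequality $F_X(F_X^{-1}(u))\ge u$), so $(p-F_X(F_X^{-1}(u)))_+^{n-2}=(p-u)_+^{n-2}$ for a.e. $u$, and therefore
$$
 \frac{1}{(n-2)!}\int_\R x\,\big(p-F_X(x)\big)_+^{n-2}\d F_X(x)=\frac{1}{(n-2)!}\int_0^1 F_X^{-1}(u)(p-u)_+^{n-2}\d u = F_X^{[-n]}(p),
$$
the last equality by \eqref{eq-251220-1}.

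The argument is essentially routine, so I do not expect a genuine conceptual obstacle; the points that need care are the integrability bookkeeping required to invoke Fubini in the induction step, and the identity $F_X(F_X^{-1}(u))=u$ used for \eqref{eq-251220-2}, which really does rely on the continuity hypothesis on $F_X$ and would fail at a jump point. Getting these technical conditions stated correctly is the main thing to be careful about.
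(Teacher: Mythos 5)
Your proof of \eqref{eq-251220-1} is correct and is essentially the paper's own argument: the same induction on $n$, with the same Fubini interchange turning $\int_0^p(q-u)_+^{n-2}\d q$ into $\frac{1}{n-1}(p-u)_+^{n-1}$ (your explicit integrability check, using $0\le (q-u)_+^{n-2}\le 1$ and $\E|X|<\oo$, is a welcome addition the paper leaves implicit). For \eqref{eq-251220-2} you take a genuinely different and shorter route. The paper verifies the case $n=2$ by hand — writing $F_X^{[-2]}(p)=\E\big[F_X^{-1}(U_X)\id_{\{U_X<p\}}\big]$ and using the strict monotonicity of $F_X^{-1}$ (a consequence of the continuity of $F_X$) to rewrite the event as $\{F_X(X)<p\}$ — and then runs a second induction with another Fubini interchange. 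You instead derive \eqref{eq-251220-2} in one step from the already-proved \eqref{eq-251220-1}: the pushforward identity $\int_\R g(x)\d F_X(x)=\int_0^1 g(F_X^{-1}(u))\d u$ combined with $F_X(F_X^{-1}(u))=u$ for continuous $F_X$ converts the right-hand side of \eqref{eq-251220-2} directly into the right-hand side of \eqref{eq-251220-1}. Both arguments are valid and both isolate continuity as the essential hypothesis (your justification of $F_X(F_X^{-1}(u))=u$ is correct, and the paper's counterexample after the proposition shows exactly this identity failing at a jump); your version buys a shorter proof with the role of continuity pinpointed in a single identity, while the paper's version additionally exhibits the probabilistic reading $F_X^{[-2]}(p)=\E[X\id_{\{F_X(X)<p\}}]$ along the way.
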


\begin{proof}
We prove the proposition by induction on $n$. Eq. \eqref{eq-251220-1} also appears in \cite{And18}. For completeness, we give its proof. For $n=2$, \eqref{eq-251220-1} is trivial. Assuming that it holds for $n=k\ge 2$, we shall show it for $k+1$. We have
\begin{align*}
    F_X^{[-k-1]} (p) & = \frac {1}{(k-2)!} \int^p_0 \(\int^v_0 F_X^{-1}(u) (v-u)^{k-2}\d u\) \d v \\[3pt]
    &= \frac {1}{(k-2)!} \int^p_0 \(\int^p_u F_X^{-1}(u) (v-u)^{k-2}\d v\) \d u \\[3pt]
    & = \frac {1}{(k-1)!} \int^p_0 F_X^{-1}(u) (p-u)^{k-1}\d u,
\end{align*}
where the order of integration could be changed by Fubini's theorem. Thus, \eqref{eq-251220-1} holds by induction.

Next, we turn to prove \eqref{eq-251220-2}. First observe that there exists a standard uniform random variable $U_X$ defined on the space $(\Omega, \mathscr{F}, \P)$ such that $X=F_X^{-1}(U_X)$ almost surely. The existence of $U_X$ was given in Lemma A.28 of \cite{FS11}; also see \cite{Delb12}. Since $F_X$ is continuous, it follows that $F_X^{-1}(u)$ is strictly increasing in $u\in (0,1)$. Then, For $n=2$, we have
\begin{align*}
  F_X^{[-2]} (p) & =\E \[F_X^{-1}(U_X) \id_{\{U_X <p\}} \]
     =\E \[F_X^{-1}(U_X) \id_{\{F^{-1}_X(U_X) < F_X^{-1}(p)\}} \] \\
    & =\E \[X \id_{\{X < F_X^{-1}(p)\}} \]  =\E \[X \id_{\{F_X(X) < p\}} \],
\end{align*}
implying \eqref{eq-251220-2} with $n=2$. Assuming that \eqref{eq-251220-2} holds for $n=k\ge 2$, we shall show it for $k+1$. We have
\begin{align*}
   F_X^{[-k-1]} (p) & = \frac {1}{(k-2)!} \int^p_0 \(\int_{\R} x \big (u-F_X(x)\big )_+^{k-2}\d F_X(x)\)\d u\\
   & = \frac {1}{(k-2)!} \int_{\R} x \(\int^p_0 \big (u-F_X(x)\big )_+^{k-2} \d u \) \d F_X(x)\\
   & =\frac {1}{(k-1)!} \int_{\R} x \big (p-F_X(x)\big )_+^{k-1}\d F_X(x),
\end{align*}
where the order of integration could be changed by Fubini's theorem. Thus, \eqref{eq-251220-2} follows by induction.
\end{proof}

From Proposition \ref{pr-251220}, it is seen that $ F_X^{[-n]} (p)$ is well defined, and $|F_X^{[-n]} (p)|\le \E |X|/(n-2)!$ for $n\ge 2$ and $p\in [0,1]$ when $X\in L^1$. Compared with $n$-SD, this is one of the advantages of $n$-ISD, because when considering $n$-SD, we need to assume that the random variables have finite $(n-1)$-th moments.

If $F_X$ is not continuous, \eqref{eq-251220-2} may be not true as illustrated by the following counterexample. Let $F_X$ have a jump at point $x_0\ne 0$ such that $p_0= F_X(x_0-) < F_X(x_0)=p_1$. Then, for $p\in (p_0, p_1)$,
$$
   F^{[-2]}_X(p)=\int^p_0 F_X^{-1}(u)\d u =\int^{p_0}_0 F_X^{-1}(u)\d u + x_0 (p-p_0),
$$
and the right hand side of \eqref{eq-251220-2} is
\begin{align*}
    \int_{\R} x \id_{\{F_X(x)<p\}} \d F_X(x)  &=\int_{\R} x \id_{\{x < x_0\}} \d F_X(x)
     =\E \[X \id_{\{X< x_0\}}\] \\
     & =\E \[F_X^{-1}(U_X) \id_{\{F_X^{-1}(U_X)< x_0\}}\] \\
     & = \E \[F_X^{-1}(U_X) \id_{\{U_X< p_0\}}\] \\
     & =\int^{p_0}_0 F_X^{-1}(u)\d u \ne  F^{[-2]}_X(p),
\end{align*}
which implies \eqref{eq-251220-2} is not true in this case.

Analogously, denote $\widetilde{F}_X^{[-1]}(p) = F_X^{-1}(p)$, and define recursively
$$
   \widetilde{F}_X^{[-k]}(p) = \int_p^1 \widetilde{F}_X^{[-k+1]}(u) \d u,\quad p \in [0,1],\ k\ge 2.
$$
By comparing $\widetilde{F}_X^{[-k]}(p)$ and $\widetilde{F}_Y^{[-k]}(p)$ for all $p\in [0,1]$, we can define another ordering, which was respectively called the \emph{dual stochastic dominance} in \cite{WY98}, and the \emph{$n$-th degree downward inverse stochastic dominance} (downward $n$-ISD) in \cite{AHM21}. From now on, we consider the $n$-ISD ordering defined by \eqref{eq-251227}, as the results of $n$-ISD can be corresponding transformed into the downward $n$-ISD case.

Similarly, we have the following proposition, where \eqref{eq-251226-6} is (4.3) in \cite{ZH24}.  Eqs. \eqref{eq-251226-6} and \eqref{eq-251220-1} will be used in the proof of Theorem \ref{th-251226}.

\begin{proposition}
 \label{pr-251221}
For $X \in L^1$ and $n\ge 2$, we have
\begin{align}
 \label{eq-251226-6}
  \widetilde{F}_X^{[-n]} (p) & = \frac {1}{(n-2)!} \int_0^1 F_X^{-1}(u) (u-p)_+^{n-2}\d u,\quad p\in [0,1].
\end{align}
If, additionally, $F_X$ is continuous, then
\begin{align*}
  \widetilde{F}_X^{[-n]} (p) & =\frac {1}{(n-2)!} \int_{\R} x\big (F_X(x)-p\big )_+^{n-2}\d F_X(x), \quad p\in [0,1].
\end{align*}
\end{proposition}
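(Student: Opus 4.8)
The plan is to prove both identities by induction on $n$, in exact parallel to the proof of Proposition \ref{pr-251220}, simply interchanging the roles of ``integrate over $(0,p]$'' and ``integrate over $[p,1)$''. Throughout, the integrals are finite because $X\in L^1$ forces $F_X^{-1}\in L^1(0,1)$, and we adopt the convention $t_+^0=\id_{\{t>0\}}$.

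For \eqref{eq-251226-6}: the base case $n=2$ is immediate, since $\widetilde{F}_X^{[-2]}(p)=\int_p^1 F_X^{-1}(u)\,\d u$ and $(u-p)_+^0=\id_{\{u>p\}}$ (the two expressions differing only on the null set $\{u=p\}$). Assuming \eqref{eq-251226-6} for $n=k\ge 2$, write $\widetilde{F}_X^{[-k-1]}(p)=\int_p^1 \widetilde{F}_X^{[-k]}(v)\,\d v$, substitute the inductive formula, and exchange the $u$- and $v$-integrals by Fubini's theorem; this is legitimate because $F_X^{-1}\in L^1(0,1)$ and the kernel $(u-v)_+^{k-2}$ is bounded on $[0,1]^2$. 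The inner integral $\int_p^1 (u-v)_+^{k-2}\,\d v$ equals $(u-p)_+^{k-1}/(k-1)$, since the integrand vanishes for $v>u$ and $u\le 1$, and this yields \eqref{eq-251226-6} with $n=k+1$.

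For the second identity under continuity of $F_X$: as in Proposition \ref{pr-251220}, take a standard uniform $U_X$ on $(\Omega,\mathscr F,\P)$ with $X=F_X^{-1}(U_X)$ a.s.\ (Lemma A.28 of \cite{FS11}), and use that $F_X^{-1}$ is strictly increasing on $(0,1)$ and $F_X(F_X^{-1}(p))=p$. For $n=2$, rewrite $\int_p^1 F_X^{-1}(u)\,\d u=\E[F_X^{-1}(U_X)\id_{\{U_X>p\}}]=\E[X\id_{\{X>F_X^{-1}(p)\}}]=\E[X\id_{\{F_X(X)>p\}}]$; since $F_X$ has no atoms, the event $\{F_X(X)=p\}$ is $\P$-null, so this equals $\int_{\R} x\,(F_X(x)-p)_+^0\,\d F_X(x)$. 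The induction step substitutes the inductive formula for $n=k$, applies Fubini's theorem, and uses $\int_p^1 (F_X(x)-v)_+^{k-2}\,\d v=(F_X(x)-p)_+^{k-1}/(k-1)$ (valid since $F_X(x)\le 1$), giving the formula for $n=k+1$.

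The only non-mechanical point — the anticipated main obstacle — is the careful bookkeeping at the boundary: the $0^0$ convention appearing in $(u-p)_+^0$ and $(F_X(x)-p)_+^0$, the endpoint cases $p\in\{0,1\}$, and, in the second identity, the equivalence of the events $\{U_X>p\}$, $\{X>F_X^{-1}(p)\}$ and $\{F_X(X)>p\}$, which is exactly where the continuity of $F_X$ (no atoms, together with strict monotonicity of $F_X^{-1}$) is used. Everything else is a direct transcription of the argument already carried out for Proposition \ref{pr-251220}.
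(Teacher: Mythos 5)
Your proof is correct and matches the paper's intended argument: the paper gives no separate proof of Proposition \ref{pr-251221}, introducing it with ``Similarly,'' and the expected justification is precisely the mirrored induction from Proposition \ref{pr-251220} with integration over $[p,1)$ in place of $(0,p]$, which is what you carry out. Your extra care with the $(\,\cdot\,)_+^0$ convention and the null-set identifications $\{U_X>p\}=\{X>F_X^{-1}(p)\}=\{F_X(X)>p\}$ is sound and, if anything, slightly more explicit than the paper's treatment of the corresponding steps.
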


The next proposition gives alternative integral representations of $F_X^{[-n]}(p)$ and $\widetilde{F}_X^{[-n]}(p)$. For a nonnegative random variable $X\in L^1$, \eqref{eq-251225-2} is given in Lemma 4.5 of \cite{WY98}.

\begin{proposition}
 \label{pr-251226}
For $X\in L^1$, $p\in (0,1)$ and $n\ge 2$, we have
\begin{align}
  F_X^{[-n]}(p) & =\frac {1}{(n-1)!} \[ \int^\oo_0 (p-F_X(t))_+^{n-1} \d t -\int^0_{-\oo}
           \Big ( p^{n-1}-(p-F_X(t))_+^{n-1}\Big ) \d t \],         \label{eq-251225-1} \\[3pt]
  \widetilde{F}_X^{[-n]}(p) & =\frac {1}{(n-1)!} \[ \int^\oo_0 \Big ((1-p)^{n-1} -(F_X(t)-p)_+^{n-1}\Big ) \d t            -\int^0_{-\oo} (F_X(t)-p)_+^{n-1}\d t \].    \label{eq-251225-2}
\end{align}
\end{proposition}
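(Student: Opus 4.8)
The plan is to derive both identities from the quantile representations \eqref{eq-251220-1} and \eqref{eq-251226-6} by rewriting $F_X^{-1}(u)$ inside those integrals via the elementary layer-cake identity
$$
a=\int_0^\oo \id_{\{t<a\}}\d t-\int_{-\oo}^0 \id_{\{t\ge a\}}\d t,\qquad a\in\R .
$$
Applying it with $a=F_X^{-1}(u)$ and invoking the Galois relation $F_X^{-1}(u)\le t\ \Longleftrightarrow\ u\le F_X(t)$ (for $u\in(0,1)$, $t\in\R$), one obtains, for a.e. $u\in(0,1)$,
$$
F_X^{-1}(u)=\int_0^\oo \id_{\{u>F_X(t)\}}\d t-\int_{-\oo}^0 \id_{\{u\le F_X(t)\}}\d t ,
$$
with both terms finite since $X\in L^1$. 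This is the quantile-side analogue of the decomposition used to pass from the recursive definitions to \eqref{eq-181228-1}--\eqref{eq-181228-2}, with the roles of $F_X$ and $F_X^{-1}$ exchanged.

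Substituting this expression for $F_X^{-1}(u)$ into \eqref{eq-251220-1}, i.e. into $F_X^{[-n]}(p)=\frac{1}{(n-2)!}\int_0^p F_X^{-1}(u)(p-u)^{n-2}\d u$, and interchanging the order of integration, the argument reduces to the two one-dimensional computations
$$
\int_0^p \id_{\{u>F_X(t)\}}(p-u)^{n-2}\d u=\frac{(p-F_X(t))_+^{n-1}}{n-1},\qquad
\int_0^p \id_{\{u\le F_X(t)\}}(p-u)^{n-2}\d u=\frac{p^{n-1}-(p-F_X(t))_+^{n-1}}{n-1},
$$
each obtained by splitting the $u$-integral at $u=F_X(t)$; assembling the two contributions yields \eqref{eq-251225-1}. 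Identity \eqref{eq-251225-2} follows along exactly the same lines, starting from \eqref{eq-251226-6}, namely $\widetilde F_X^{[-n]}(p)=\frac{1}{(n-2)!}\int_p^1 F_X^{-1}(u)(u-p)^{n-2}\d u$, and using that $\int_p^1 \id_{\{u>F_X(t)\}}(u-p)^{n-2}\d u=\big((1-p)^{n-1}-(F_X(t)-p)_+^{n-1}\big)/(n-1)$ and $\int_p^1 \id_{\{u\le F_X(t)\}}(u-p)^{n-2}\d u=(F_X(t)-p)_+^{n-1}/(n-1)$.

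The only step needing care is the interchange of integration order, which I would justify by Tonelli's theorem: all four integrands above are nonnegative, so it is enough to check that the resulting outer integrals are finite. For \eqref{eq-251225-1} this means $\int_0^\oo (p-F_X(t))_+^{n-1}\d t<\oo$ and $\int_{-\oo}^0 \big(p^{n-1}-(p-F_X(t))_+^{n-1}\big)\d t<\oo$; the first follows from $(p-F_X(t))_+^{n-1}\le p^{n-2}(1-F_X(t))$ and $\int_0^\oo (1-F_X(t))\d t=\E[X_+]<\oo$, and the second from splitting $(-\oo,0]$ at $t=F_X^{-1}(p)$ --- on $[F_X^{-1}(p),0]$ the integrand is bounded by $p^{n-1}$ over a bounded interval, and on $\{t\le 0:F_X(t)<p\}$ the factorisation $p^{n-1}-(p-F_X(t))^{n-1}\le(n-1)p^{n-2}F_X(t)$ together with $\int_{-\oo}^0 F_X(t)\d t=\E[X_-]<\oo$ gives the bound. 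The estimates for \eqref{eq-251225-2} are the same with $p$ replaced by $1-p$ where appropriate. I do not anticipate a genuine obstacle; the work is purely in keeping the bookkeeping of the truncations $(\cdot)_+$ and of the splitting points straight and in stating the integrability bounds cleanly so that the Tonelli step is rigorous.
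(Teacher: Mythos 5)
Your proof is correct, and it takes a genuinely different route from the paper's. The paper proves Proposition \ref{pr-251226} by induction on $n$ using the recursive definitions: the base case $n=2$ is justified geometrically by the area decomposition in Figure \ref{Fig-1}, and the inductive step integrates the induction hypothesis over $[0,p]$ (resp. $[p,1]$) and applies Fubini. You instead give a direct, non-inductive derivation from the closed-form quantile representations \eqref{eq-251220-1} and \eqref{eq-251226-6}, expanding $F_X^{-1}(u)=\int_0^\oo \id_{\{u>F_X(t)\}}\d t-\int_{-\oo}^0\id_{\{u\le F_X(t)\}}\d t$ via the layer-cake identity and the Galois relation $F_X^{-1}(u)\le t\Leftrightarrow u\le F_X(t)$, then swapping the order of integration; the inner $u$-integrals you compute are correct in all cases (including $F_X(t)\ge p$, where the truncations collapse as claimed), and $(n-1)\cdot(n-2)!=(n-1)!$ gives the stated constants. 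What your approach buys: it supplies a fully analytic proof of the $n=2$ case, which the paper supports only by a picture, and it makes explicit the duality between the quantile-side and distribution-side representations (mirroring how \eqref{eq-181228-1} relates to \eqref{eq-251228}); it also localizes all the integrability issues into two clean bounds, $(p-F_X(t))_+^{n-1}\le p^{n-2}(1-F_X(t))$ and $p^{n-1}-(p-F_X(t))^{n-1}\le (n-1)p^{n-2}F_X(t)$, which correctly reduce matters to $\E[X_+]<\oo$ and $\E[X_-]<\oo$. The cost is that your proof presupposes Propositions \ref{pr-251220} and \ref{pr-251221} (which the paper has already established, so this is legitimate) and requires the layer-cake/Galois machinery that the induction avoids. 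One cosmetic remark: the splitting of $\int_0^p\big(A(u)-B(u)\big)(p-u)^{n-2}\d u$ into a difference of two integrals is licensed because each piece is bounded by $p^{n-2}\,\E|X|$, i.e. $A(u)=(F_X^{-1}(u))_+$ and $B(u)=(F_X^{-1}(u))_-$ up to null sets; stating this explicitly would make the Tonelli step airtight, but it is implicit in your finiteness checks.
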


\begin{proof}
We prove the proposition by induction on $n$. For $n=2$,
\begin{align*}
  F_X^{[-2]}(p) & = \int^\oo_0 (p-F_X(t))_+ \d t -\int^0_{-\oo} \Big ( p-(p-F_X(t))_+\Big ) \d t,\\[3pt]
  \widetilde{F}_X^{[-2]}(p) & =\int^\oo_0 \Big (1-p -(F_X(t)-p)_+ \Big ) \d t
            -\int^0_{-\oo} (F_X(t)-p)_+ \d t,
\end{align*}
as can be seen from Figure \ref{Fig-1}.
\begin{figure}[htbp]
  \centering
  \includegraphics[scale=0.25]{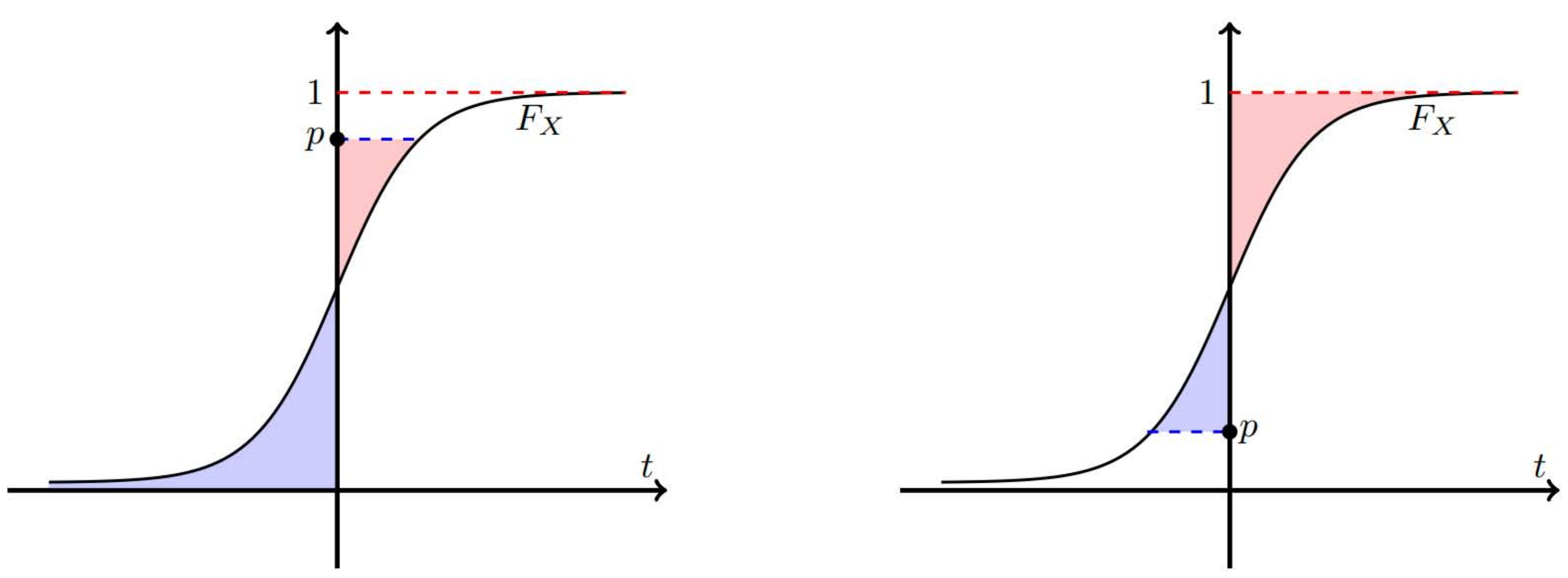}
  \caption{Representations of $F_X^{[-2]}(p)$ and $\widetilde{F}_X^{[-2]}(p)$}
   \label{Fig-1}
\end{figure}
This means that \eqref{eq-251225-1} and \eqref{eq-251225-2} are true for $n=2$. Assume that
\eqref{eq-251225-1} and \eqref{eq-251225-2} hold for $n=k\ge 2$. For $n=k+1$, we have
\begin{align*}
  k! F_X^{[-k-1]}(p) & =\int^p_0 k! F_X^{[-k]}(u) \d u  \\[3pt]
  & = k \int^\oo_0 \[ \int^p_0 (u\!-\! F_X(t))_+^{k-1}\d u \] \d t - k\int^0_{-\oo} \[\int^p_0
           \Big ( u^{k-1}-(u\! -\! F_X(t))_+^{k-1}\Big ) \d u\] \d t  \\[3pt]
  & = \int^\oo_0 (p-F_X(t))_+^k \d t - k\int^0_{-\oo} \Big ( p^k-(p-F_X(t))_+^k\Big ) \d t,
\end{align*}
and
\begin{align*}
 k! \widetilde{F}_X^{[-k-1]}(p)  & =\int_p^1 k! \widetilde{F}_X^{[-k]}(u) \d u  \\[3pt]
  & = k\int^\oo_0 \[\int^1_p\Big ( (1\!-\! u)^{k-1} -(F_X(t)\!-\! u)_+^{k-1}\Big )\d u\] \d t
          - k\int^0_{-\oo}\[\int^1_p (F_X(t)\! -\! u)_+^{k-1}\d u\]\d t \\[3pt]
  & = \int^\oo_0 \Big ( (1-p)^k -(F_X(t)- p)_+^k\Big ) \d t - \int^0_{-\oo} (F_X(t) - p)_+^k\d t.
\end{align*}
Therefore, \eqref{eq-251225-1} and \eqref{eq-251225-2} hold for $n=k+1$. This proves the proposition by induction.
\end{proof}

\section{Moment inequalities for $n$-ISD}
\label{sect-3}

It is well known that $\E [X]\le \E [Y]$ whenever $X\le^{-}_2 Y$. The following two examples demonstrate that the relation $X\le^{-}_3 Y$ may hold whether $\E [X]>\E[Y]$ or $\E[X] <\E[Y]$.

\begin{example}{\rm
($X\le^{-}_3 Y$ and $\E [X] >\E [Y]$)\ \ Consider two random variables $X$ and $Y$ with probability mass functions $\P(X=0) =0.5 =1-\P(X=10)$ and $\P(Y=4)=0.9 =1-\P(Y =4.1)$. Then $\E[X]=5>4.01 =\E [Y]$, and
\begin{equation*}
  F_X^{-1}(p) =\left\{\begin{array}{ll} 0, & p \in [0, 0.5], \\ 10, & p \in (0.5, 1], \end{array} \right. \qquad
    F_Y^{-1}(p)=\left\{\begin{array}{ll} 4, & p\in [0, 0.9], \\ 4.1, & p \in (0.9, 1]. \end{array} \right.
\end{equation*}
By Proposition \ref{pr-251220}, we have
\begin{equation*}
   F_X^{[-3]}(p)= \int_0^p F_X^{-1}(u)(p-u)\d u = \left\{\begin{array}{ll} 0, & p\in [0, 0.5], \\
    5 (p - 0.5)^2, & p\in (0.5, 1], \end{array} \right.
\end{equation*}
and
\begin{equation*}
  F_Y^{[-3]}(p)= \int_0^p F_Y^{-1}(u) (p-u)\d u= \left\{\begin{array}{ll} 2 p^2, & p\in [0, 0.9], \\
   (p-0.9)^2/20 + 2 p^2, & p \in (0.9, 1], \end{array} \right.
\end{equation*}
It is straightforward to verify that $F_X^{[-3]}(p)\le F_Y^{[-3]}(p)$ for all $p\in [0,1]$. Thus, $X\le^{-}_3 Y$.  }
\end{example}

Examples 1 and 2 in \cite{DC10} also satisfy $X\le_3^{-1} Y$ and $\E [X] >\E [Y]$.

\begin{example}{\rm
($X\le^{-}_3 Y$ and $\E [X] <\E [Y]$)\ \ Consider two random variables $X$ and $Y$ with $\P(X=1) =0.5 =1-\P (X = 3)$ and $\P(Y=2.5)=1$. Then $\E [X]=2<2.5=\E[Y]$, and
\begin{equation*}
   F_X^{-1}(p) = \left\{\begin{array}{ll} 1, & p \in [0, 0.5], \\ 3, & p\in (0.5,1], \end{array} \right. \qquad
   F_Y^{-1}(p) = 2.5.
\end{equation*}
Thus,
\begin{equation*}
   F_X^{[-3]}(p) =\int_0^p F_X^{-1}(u) (p-u)\d u = \left\{\begin{array}{ll} 0.5 p^2, & p\in [0, 0.5], \\
     1.5 p^2 -p+ 0.25, & p\in (0.5, 1], \end{array} \right.
\end{equation*}
and
\begin{equation*}
     F_Y^{[-3]}(p) = \int_0^p F_Y^{-1}(u) (p-u) \d u = 1.25 p^2,\quad p\in [0,1].
\end{equation*}
It is straightforward to verify that $F_X^{[-3]}(p)\le F_Y^{[-3]}(p)$ for all $p\in [0,1]$. Thus, $X\le^{-}_3 Y$. }
\end{example}

Let $\{X_n, n\ge 1\}$ be a sequence of independent and identically distributed random variables with a common distribution $F_X$, and set $X_{1:k}=\min\{X_1, \ldots, X_k\}$ for $k\ge 1$. For convenience, denote $\mu^X_{1:k} =\E [X_{1:k}]$. Since $F_{X_{1:k}}(x)=1- [S_X(x)]^k$, it follows that
$F_{X_{1:k}}^{-1}(p)= F_X^{-1} (1-(1-p)^{1/k})$ for $p\in (0,1)$. Thus,
\begin{equation}
 \label{eq-251226-7}
   \mu^X_{1:k} =\int^1_0 F_X^{-1} (1-(1-p)^{1/k}) \d p =k\int^1_0 F_X^{-1}(u) (1-u)^{k-1}\d u =  k! F_X^{[-k-1]}(1).
\end{equation}
Another two expressions of $ \mu^X_{1:k}$ are as follows:
\begin{align*}
    \mu^X_{1:k} & =k\int_{\R} x [S_X(x)]^{k-1} \d F_X(x) \\[3pt]
    & = \int^\oo_0 [S_X(x)]^k \d x - \int^0_{-\oo} \big (1-[S_X(x)]^k\big ) \d x.
\end{align*}

The next proposition provides necessary conditions for $n$-ISD, whose special case for nonnegative random variables is Theorem 1 in \cite{MS89}.

\begin{proposition}
If $X\le^{-}_n Y$ for $n>2$, where $X, Y\in L^1$, then $\mu^X_{1:k} \le \mu^Y_{1:k}$ for all $k \ge n-1$.
\end{proposition}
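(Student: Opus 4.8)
The plan is to reduce everything to the representation \eqref{eq-251226-7}, which says $\mu^X_{1:k} = k!\, F_X^{[-k-1]}(1)$, together with the $n$-ISD hypothesis $F_X^{[-n]}(p) \le F_Y^{[-n]}(p)$ for all $p\in(0,1)$. The key observation is that for $k\ge n-1$ the quantity $F_X^{[-k-1]}(1)$ can be obtained from $F_X^{[-n]}$ by integrating $k+1-n$ more times over $(0,p]$ and then evaluating at $p=1$; since integration over $(0,p]$ against a nonnegative kernel preserves the inequality, the ordering propagates from level $n$ up to level $k+1$.

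Concretely, I would first record the identity, valid for any $m \ge n$,
\begin{equation*}
  F_X^{[-m]}(1) = \frac{1}{(m-n-1)!}\int_0^1 F_X^{[-n]}(u)\,(1-u)^{m-n-1}\d u \qquad (m \ge n+1),
\end{equation*}
with the convention that for $m=n$ the right-hand side is just $F_X^{[-n]}(1)$ (and for $m=n+1$ the kernel $(1-u)^0\equiv 1$). This follows by the same Fubini-type induction already used in the proof of Proposition \ref{pr-251220}: each application of $\int_0^p(\cdot)\d u$ raises the exponent of the kernel by one, and evaluation at $p=1$ gives the stated closed form. Strictly, one should check the endpoint $p=1$ is admissible, but Proposition \ref{pr-251220} gives $|F_X^{[-n]}(p)|\le \E|X|/(n-2)!$ uniformly on $[0,1]$, so all the integrals converge and the recursion \eqref{eq-251229} extends continuously to $p=1$.

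With this in hand the argument is immediate. Fix $k \ge n-1$ and apply the identity with $m = k+1 \ge n$. Because $(1-u)^{k-n}\ge 0$ on $(0,1)$ and $F_X^{[-n]}(u)\le F_Y^{[-n]}(u)$ pointwise by hypothesis, we get
\begin{equation*}
  F_X^{[-k-1]}(1) = \frac{1}{(k-n)!}\int_0^1 F_X^{[-n]}(u)(1-u)^{k-n}\d u
  \le \frac{1}{(k-n)!}\int_0^1 F_Y^{[-n]}(u)(1-u)^{k-n}\d u = F_Y^{[-k-1]}(1),
\end{equation*}
where the case $k=n-1$ is simply $F_X^{[-n]}(1)\le F_Y^{[-n]}(1)$, which holds by continuity from the strict inequality on $(0,1)$. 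Multiplying through by $k!>0$ and invoking \eqref{eq-251226-7} yields $\mu^X_{1:k}\le\mu^Y_{1:k}$, as desired.

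The only genuinely delicate point is the passage to the closed endpoint $p=1$: the $n$-ISD hypothesis is stated on the open interval $(0,1)$, so one must argue that $F_X^{[-n]}(1)\le F_Y^{[-n]}(1)$ and, more to the point, that the weighted integrals over $(0,1)$ control $F_X^{[-k-1]}(1)$ exactly. Both follow from the uniform bound in Proposition \ref{pr-251220} and dominated convergence (the integrand $|F_X^{[-n]}(u)|(1-u)^{k-n}$ is dominated by the integrable function $u\mapsto \E|X|/(n-2)!$), so this is a routine limiting argument rather than a real obstacle; everything else is bookkeeping with Fubini's theorem exactly as in the earlier propositions.
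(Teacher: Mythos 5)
Your proof is correct and is essentially the paper's argument: the paper simply notes that $X\le^-_n Y$ implies $X\le^-_{k+1}Y$ for $k\ge n-1$ (which is exactly your iterated-integration step, stated abstractly) and then invokes \eqref{eq-251226-7}, whereas you unroll the same iteration into an explicit $(1-u)^{k-n}$ kernel. The extra care you take about the endpoint $p=1$ is fine but not really needed beyond the continuity of $F_X^{[-m]}$ on $[0,1]$ for $m\ge 2$.
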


\begin{proof}
Since $X\le^-_n Y$ implies $X\le^-_{n+1} Y$, it sufficed to prove $\mu^X_{1:(n-1)} \le \mu^Y_{1:(n-1)}$ when $X\le^-_n Y$. The desired result now follows from \eqref{eq-251226-7} and the definition of $n$-ISD.
\end{proof}

\begin{theorem}
 \label{th-251226}
Let $X, Y\in L^1$ satisfying $X\le_n^- Y$ for $n>2$. If
\begin{equation}
 \label{eq-251226-1}
    \mu^X_{1:(n-1-j)}=\mu^Y_{1:(n-1-j)}\ \hbox{for}\ j=0, \ldots, k,\ 0\le k<n-2.
\end{equation}
then
\begin{equation}
 \label{eq-251226-2}
   (-1)^{k+1}\, \mu^X_{1:(n-2-k)} \le (-1)^{k+1} \,\mu^Y_{1:(n-2-k)}.
\end{equation}
\end{theorem}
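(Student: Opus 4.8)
The plan is to mimic the structure of Theorem~\ref{th-181225}: translate the $n$-ISD hypothesis into a statement about the behavior of $F_Y^{[-n]}(p)-F_X^{[-n]}(p)$ near the endpoint $p=1$, and then connect the derivatives at $p=1$ of the integrated quantile functions with the quantities $\mu^X_{1:k}$ via \eqref{eq-251226-7}. The key observation is that, by \eqref{eq-251229}, differentiating $F_X^{[-n]}$ once lowers the order by one, so the $j$-th derivative of $F_X^{[-n]}$ at $p=1$ equals $F_X^{[-(n-j)]}(1)$, which by \eqref{eq-251226-7} is $\mu^X_{1:(n-1-j)}/(n-1-j)!$. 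Thus the moment hypothesis \eqref{eq-251226-1} is precisely the statement that $F_X^{[-n]}$ and $F_Y^{[-n]}$ agree, together with their first $k$ derivatives, at $p=1$, and the desired conclusion \eqref{eq-251226-2} is a sign condition on the $(k+1)$-st derivative there.

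First I would set $h(p) = F_Y^{[-n]}(p) - F_X^{[-n]}(p)$, so that $h(p)\ge 0$ on $(0,1)$ by the $n$-ISD assumption, and $h$ is smooth on $(0,1)$ with $h^{(j)}(1) = F_Y^{[-(n-j)]}(1) - F_X^{[-(n-j)]}(1)$ for $j=0,1,\ldots,n-2$ (the last few representations following from Proposition~\ref{pr-251220}, which guarantees $F_X^{[-m]}(p)$ is finite and well behaved up to $p=1$ for $X\in L^1$). The hypothesis \eqref{eq-251226-1} gives $h(1) = h'(1) = \cdots = h^{(k)}(1) = 0$, using \eqref{eq-251226-7} to identify $F_X^{[-(n-j)]}(1) = \mu^X_{1:(n-1-j)}/(n-1-j)!$. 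Then I would apply a Taylor expansion of $h$ at $p=1$ from the left: since $h \ge 0$ on a left-neighborhood of $1$ and its first $k$ derivatives vanish at $1$, the leading term is
\begin{equation*}
  h(p) = \frac{h^{(k+1)}(1)}{(k+1)!}\,(p-1)^{k+1} + o\big((p-1)^{k+1}\big),\qquad p\uparrow 1.
\end{equation*}
For this to be nonnegative as $p\uparrow 1$ (where $(p-1)^{k+1}$ has sign $(-1)^{k+1}$), we must have $(-1)^{k+1} h^{(k+1)}(1) \ge 0$, i.e.
\begin{equation*}
  (-1)^{k+1}\Big(F_Y^{[-(n-k-1)]}(1) - F_X^{[-(n-k-1)]}(1)\Big) \ge 0,
\end{equation*}
which, again via \eqref{eq-251226-7} with $k' = n-k-2$, is exactly \eqref{eq-251226-2} after clearing the positive constant $1/(n-2-k)!$.

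The main obstacle is justifying the Taylor expansion cleanly at the endpoint $p=1$: $F_X^{-1}$ need not be bounded or continuous, so $F_X^{[-1]}$ is only integrable, and one must verify that the higher integrated quantile functions $F_X^{[-m]}$ are genuinely $C^{m-2}$ up to and including $p=1$, with one-sided derivatives at $1$ given by $F_X^{[-(m-1)]}(1)$, so that the remainder term is truly $o((p-1)^{k+1})$ rather than merely $O((p-1)^{k+1})$. This is where I would lean on the integral representation \eqref{eq-251220-1}: writing $F_X^{[-m]}(p) = \frac{1}{(m-2)!}\int_0^1 F_X^{-1}(u)(p-u)_+^{m-2}\,\d u$, the integrand is dominated by $|F_X^{-1}(u)| \in L^1$ uniformly for $p$ in a neighborhood of $1$, so dominated convergence licenses differentiating under the integral sign $m-2$ times and taking the limit $p\uparrow 1$; continuity of $p \mapsto F_X^{[-2]}(p) = \int_0^p F_X^{-1}$ at $p=1$ handles the base case. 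Once this regularity is in hand, the argument is the endpoint analogue of the standard "a nonnegative function whose first $k$ derivatives vanish has the right sign on its $(k+1)$-st derivative" lemma, and nothing further is needed. An alternative, perhaps cleaner route avoiding explicit Taylor remainders: iterate the elementary fact that if $g\ge 0$ near $1$, $g(1)=0$, and $g$ is $C^1$ there, then $g'(1)\le 0$ (one-sided), applied successively to $h, h', \ldots$ after dividing out the known vanishing — but this requires tracking signs through $k+1$ steps, which is essentially repackaging the same Taylor computation.
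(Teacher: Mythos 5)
Your argument is correct, and it reaches the same asymptotic information at $p=1$ as the paper does, but it is organized quite differently. The paper argues by contradiction and splits into cases according to the parity of $n$: for odd $n$ it works with $A_n^X(p)=F_X^{[-n]}(p)-\widetilde F_X^{[-n]}(p)$ and for even $n$ with $B_n^X(p)=F_X^{[-n]}(p)+\widetilde F_X^{[-n]}(p)$, each of which equals the polynomial $\frac{1}{(n-2)!}\int_0^1F_X^{-1}(u)(p-u)^{n-2}\,\d u$; it then expands this polynomial in powers of $(1-p)$ with coefficients $\pm\binom{n-1}{j}\mu^X_{1:j}$ via \eqref{eq-251230-2}--\eqref{eq-251230-3}, shows $\widetilde F^{[-n]}(p)/(1-p)^{n-2}\to 0$, and derives a sign contradiction with $X\le_n^- Y$ near $p=1$. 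Your route observes that this polynomial is precisely the Taylor polynomial of $F_X^{[-n]}$ at $p=1$, since $(F_X^{[-n]})^{(j)}(1)=F_X^{[-(n-j)]}(1)=\mu^X_{1:(n-1-j)}/(n-1-j)!$ by \eqref{eq-251229} and \eqref{eq-251226-7}, so you can apply Taylor's theorem directly to $h=F_Y^{[-n]}-F_X^{[-n]}\ge 0$ and read off the sign of the first nonvanishing one-sided derivative. This buys you a uniform treatment (no odd/even split, no explicit appeal to $\widetilde F^{[-n]}$, and the remainder control is delegated to the standard Lagrange/Peano form rather than the explicit bound $|\widetilde F^{[-n]}(p)|\le\frac{1}{(n-2)!}\int_p^1|F_X^{-1}(u)|\,\d u\cdot(1)$ used in the paper). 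The one point that genuinely needs the care you flag is the endpoint regularity: since $k\le n-3$, the deepest derivative required is $h^{(k+1)}=F_Y^{[-(n-k-1)]}-F_X^{[-(n-k-1)]}$ with $n-k-1\ge 2$, so every derivative you take is of the form $(F^{[-m]})'=F^{[-(m-1)]}$ with $m-1\ge 2$, and $F^{[-2]}$ is (absolutely) continuous on $[0,1]$ because $F_X^{-1}\in L^1(0,1)$; your dominated-convergence justification via \eqref{eq-251220-1} settles this. With that in place the proof is complete and, if anything, cleaner than the paper's.
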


\begin{proof}
We will prove the desired result \eqref{eq-251226-2} by contradiction. Assume on the contrary that \eqref{eq-251226-2} does not hold, that is,
\begin{equation}
 \label{eq-251226-3}
   (-1)^{k+1}\, \mu^X_{1:(n-2-k)} > (-1)^{k+1} \,\mu^Y_{1:(n-2-k)}.
\end{equation}
We consider two cases based on whether $n$ is odd or even.

First, assume $n=2m+1>2$ is odd. By \eqref{eq-251220-1} and \eqref{eq-251226-6}, we have
\begin{align}
  A^X_n(p) & :=  F_X^{[-n]} (p) - \widetilde{F}_X^{[-n]} (p) \nonumber \\
  & = \frac {1}{(n-2)!} \int^1_0 F_X^{-1}(u) \[(p-u)_+^{n-2} - (u-p)_+^{n-2}\] \d u \nonumber  \\[3pt]
  & = \frac {1}{(n-2)!} \int^1_0 F_X^{-1}(u) \[\((p-u)^{n-2}\)_+ - \((p-u)^{n-2}\)_-\] \d u \nonumber \\[3pt]
  & = \frac {1}{(n-2)!} \int^1_0 F_X^{-1}(u) (p-u)^{n-2} \d u \nonumber \\[3pt]
  &=\frac {(1-p)^{n-2}}{(n-2)!} \sum^{n-2}_{j=0} (-1)^{j+1} {n-2\choose j} (1-p)^{-j} \int^1_0 F_X^{-1}(u) (1-u)^j\d u  \nonumber \\[3pt]
  &=\frac {(1-p)^{n-2}}{(n-1)!} \sum^{n-2}_{j=0} (-1)^{j+1} {n-1\choose j+1} (1-p)^{-j} \int^1_0 (j+1) F_X^{-1}(u) (1-u)^j\d u  \nonumber \\[3pt]
  &=\frac {(1-p)^{n-2}}{(n-1)!} \sum^{n-2}_{j=0} (-1)^{j+1} {n-1\choose j+1} (1-p)^{-j} \mu^X_{1:(j+1)}  \nonumber \\[3pt]
  & = \frac {(1-p)^{n-1}}{(n-1)!} \sum^{n-1}_{j=1} (-1)^j {n-1\choose j} (1-p)^{-j} \mu^X_{1:j}.
    \label{eq-251230-2}
\end{align}

% By Proposition \ref{pr-251226}, we have
% \begin{align*}
%  L_X(p) & :=  F_X^{[-n]} (p) - \widetilde{F}_X^{[-n]} (p) \\
%  &=\frac {1}{(n-1)!} \[\int^\oo_0\Big ((p-F_X(t))_+^{n-1}-(1-p)^{n-1}+(F_X(t)-p)_+^{n-1}\Big )\d t \right. \\[3pt]
%  & \qquad\qquad \left. -\int^0_{-\oo}\Big (p^{n-1}-(p-F_X(t))_+^{n-1}-(F_X(t)-p)_+^{n-1}\Big )\d t\] \\[3pt]
%   &=\frac {1}{(n-1)!} \[\int^\oo_0\Big ((p-F_X(t))^{n-1}-(1-p)^{n-1}\Big )\d t
%       -\int^0_{-\oo}\Big (p^{n-1}-(p-F_X(t))^{n-1}\Big )\d t\]  \\[3pt]
%   &=\frac {1}{(n-1)!} \[\sum^{n-1}_{j=1} (-1)^j {n-1\choose j} (1-p)^{n-1-j} \int^\oo_0 [S_X(t)]^j \d t \right. \\[3pt]
%  & \qquad \qquad \left. -\sum^{n-1}_{j=1} (-1)^j {n-1\choose j} (1-p)^{n-1-j} \int^0_{-\oo} \Big (1-[S_X(t)]^j\Big )\d t\] \\[3pt]
%  &=\frac {1}{(n-1)!} \sum^{n-1}_{j=1} (-1)^j {n-1\choose j} (1-p)^{n-1-j} \[\int^\oo_0 [S_X(t)]^j \d t
%          -\int^0_{-\oo} \Big (1-[S_X(t)]^j\Big )\d t \] \\[3pt]
%  &=\frac {1}{(n-1)!} \sum^{n-1}_{j=1} (-1)^j {n-1\choose j} (1-p)^{n-1-j} \mu^X_{1:j}   \\[3pt]
%  & = \frac {(1-p)^{n-1}}{(n-1)!} \sum^{n-1}_{j=1} (-1)^j {n-1\choose j} (1-p)^{-j} \mu^X_{1:j}.
% \end{align*}

\noindent Therefore, by \eqref{eq-251226-1} and \eqref{eq-251226-3},
\begin{align*}
   & \frac {(n-1)!}{(1-p)^{n-2}} \(A^X_n(p) -A^Y_n(p)\) \\
   & \quad = (1-p) \sum^{n-2-k}_{j=1} (-1)^j {n-1\choose j} (1-p)^{-j} \big (\mu^X_{1:j}-\mu^Y_{1:j}\big ) \\
   & \quad = {n-1\choose n-2-k} (-1)^{k+1} \(\mu^X_{1:(n-2-k)}-\mu^Y_{1:(n-2-k)}\)  (1-p)^{-(n-3-k)} [1 + o (1)]  \\[3pt]
   & \quad \longrightarrow \delta_k,\quad p\to 1,
\end{align*}
where $\delta_k = {n-1\choose n-2-k} (-1)^{k+1} \big (\mu^X_{1:(n-2-k)}-\mu^Y_{1:(n-2-k)}\big )>0$ for $k=n-3$, and $\delta_k=+\oo$ for $k<n-3$. On the other hand, as $p\to 1$,
$$
  \frac{\left |\widetilde{F}_X^{[-n]}(p)\right |}{(1-p)^{n-2}}
     \le \frac {1}{(n-2)!} \int_p^1 \left | F_X^{-1}(u)\right | \(\frac {u-p}{1-p}\)^{n-2}\d u
       \le \frac {1}{(n-2)!} \int_p^1 \left | F_X^{-1}(u)\right |\d u \to 0.
$$
Similarly, $ \left |\widetilde{F}_Y^{[-n]}(p)\right |/(1-p)^{n-2}\to 0$ as $p\to 1$. So, we get that
\begin{equation}
 \label{eq-251222-4}
    \frac {(n-1)!}{(1-p)^{n-2}} \(F_X^{[-n]}(p) -F_Y^{[-n]}(p)\) \longrightarrow \delta_k>0,\quad p\to 1,
\end{equation}
implying that there exists $p _0\in (0,1)$ such that $F_X^{[-n]}(p) > F_Y^{[-n]}(p)$ for $p>p_0$. This is a contradiction.

Next, assume $n=2m>2$. Then
\begin{align}
  B_n^X(p) & :=  F_X^{[-n]} (p) + \widetilde{F}_X^{[-n]} (p)  \nonumber  \\
   & = \frac {1}{(n-2)!} \int^1_0 F_X^{-1}(u) (p-u)^{n-2} \d u  \nonumber \\[3pt]
  &=\frac {(1-p)^{n-2}}{(n-2)!} \sum^{n-2}_{j=0} (-1)^j {n-2\choose j} (1-p)^{-j} \int^1_0 F_X^{-1}(u) (1-u)^j\d u  \nonumber \\[3pt]
  &=\frac {(1-p)^{n-2}}{(n-1)!} \sum^{n-2}_{j=0} (-1)^j {n-1\choose j+1} (1-p)^{-j} \int^1_0 (j+1) F_X^{-1}(u) (1-u)^j\d u  \nonumber \\[3pt]
  &=\frac {(1-p)^{n-2}}{(n-1)!} \sum^{n-2}_{j=0} (-1)^j {n-1\choose j+1} (1-p)^{-j} \mu^X_{1:(j+1)} \nonumber \\[3pt]
  & = - \frac {(1-p)^{n-1}}{(n-1)!} \sum^{n-1}_{j=1} (-1)^j {n-1\choose j} (1-p)^{-j} \mu^X_{1:j}.  \label{eq-251230-3}
\end{align}
Again,  by \eqref{eq-251226-1} and \eqref{eq-251226-3}, we have
\begin{align*}
   & \frac {(n-1)!}{(1-p)^{n-2}} \(B_n^X(p) - B_n^Y(p)\) \\
   & \qquad = -(1-p) \sum^{n-2-k}_{j=1} (-1)^j {n-1\choose j} (1-p)^{-j} \big (\mu^X_{1:j}-\mu^Y_{1:j}\big ) \\
   & \qquad =- {n-1\choose n-2-k} (-1)^{n-2-k} \(\mu^X_{1:(n-2-k)}-\mu^Y_{1:(n-2-k)}\)  (1-p)^{-(n-3-k)} [1 + o (1)]  \\[3pt]
   & \qquad = {n-1\choose n-2-k} (-1)^{k+1} \(\mu^X_{1:(n-2-k)}-\mu^Y_{1:(n-2-k)}\)  (1-p)^{-(n-3-k)} [1 + o (1)]  \\[3pt]
   & \qquad \longrightarrow \delta_k,\quad p\to 1.
\end{align*}
Similarly, we conclude \eqref{eq-251222-4}, which is a contradiction. Therefore, we complete the proof of the theorem.
\end{proof}
	
Theorem \ref{th-251226} can be proved by using \eqref{eq-251225-1} and \eqref{eq-251225-2}, instead of \eqref{eq-251220-1} and \eqref{eq-251226-6}. According to Theorem \ref{th-251226},  if $X\le^{-}_n Y$ and $\mu^X_{1:(n-1)}=\mu^Y_{1:(n-1)}$, then $\mu^X_{1:(n-2)}\ge \mu^Y_{1:(n-2)}$. However, the following two examples show that when the condition $\mu^X_{1:(n-1)}=\mu^Y_{1:(n-1)}$ is removed, the relationship between $\mu^X_{1:(n-2)}$ and $\mu^Y_{1:(n-2)}$ is uncertain --- it can be either $\mu^X_{1:(n-2)}\ge  \mu^Y_{1:(n-2)}$ or $\mu^X_{1:(n-2)}< \mu^Y_{1:(n-2)}$.

\begin{example}
{\rm Consider a random variables $X$ satisfying $\P(X =\pm a) = 0.5$ with $a>0$, and let $Y=0$. Then $\E[X]=0= \E[Y]$, $F_Y^{-1}(p)=0$, and
\begin{equation*}
   F_X^{-1}(p) = \left\{\begin{array}{ll} -a, & p \in [0, 0.5], \\ a, & p \in (0.5, 1], \end{array} \right.
\end{equation*}
By \eqref{eq-251220-1}, we obtain
\begin{equation*}
     F_X^{[-4]}(p) = \frac{1}{2} \int_0^p F_X^{-1}(u) (p -u)^2 \d u
     = \left\{\begin{array}{ll} -a p^3/6, & p \in [0, 0.5], \\ a (2 (p-0.5)^3 -p^3)/6, & p\in (0.5, 1], \end{array} \right.
\end{equation*}
and $F_Y^{[-4]}(p)=0$ for $p\in [0,1]$. It is easy to see that $F_X^{[-4]}(p) \le 0 = F_Y^{[-4]}(p)$ for all $p\in [0,1]$. Thus, $X\le^{-}_4 Y$. Through calculation, we have $\mu^X_{1:2}=-a/2 < 0=\mu^Y_{1:2}$ and $\mu^X_{1:3} =-3a/4 < \mu^Y_{1:3}$.  }
\end{example}

\begin{example}
{\rm Consider two random variables $X$ and $Y$ with $\P(X = 0)= 0.2$, $\P(X = 4) =0.5$, $\P(X = 5) = 0.3$ and $\P(Y = 1) = 0.2$, $\P(Y = 3) = 0.5$, $\P(Y = 6) = 0.3$. Then $\E[X]=3.5= \E[Y]$, and
\begin{equation*}
   F_X^{-1}(p) = \left\{\begin{array}{ll} 0, & p\in [0, 0.2], \\ 4, & p\in (0.2, 0.7], \\ 5, & p\in (0.7, 1], \end{array} \right. \qquad
   F_Y^{-1}(p) =\left\{\begin{array}{rl} 1, & p\in [0, 0.2], \\ 3, & p\in (0.2, 0.7], \\ 6, & p \in (0.7, 1]. \end{array} \right.
\end{equation*}
By \eqref{eq-251220-1}, we have
$$
 6 F_X^{[-4]}(p) = \left\{\begin{array}{ll}  0, & p\in [0,0.2],\\
          4(p-0.2)^3, & p\in (0.2, 0.7],\\
          (p-0.7)^3+ 4(p-0.2)^3, & p\in [0.7, 1],  \end{array}\right.
$$
and
\begin{equation*}
  6 F_Y^{[-4]}(p) = \left\{\begin{array}{ll}  p^3, & p\in [0, 0.2], \\
        2(p- 0.2)^3 + p^3, & p \in (0.2, 0.7], \\
        (p - 0.7)^3 + 2 (p- 0.2)^3 + p^3, & p\in (0.7, 1]. \end{array} \right.
\end{equation*}
It is straightforward to verify that $F_X^{[-4]}(p) \le F_Y^{[-4]}(p)$ for all $p \in [0,1]$, that is, $X\le^{-}_4 Y$. Through calculation, we have $\mu^X_{1:2}=2.65> 2.55=\mu^Y_{1:2}$ and $\mu^X_{1:3}=2.075<2.105=\mu^Y_{1:3}$.  }
\end{example}

\cite{DC10} introduced a new stochastic order called \emph{strong $n$-ISD} such that such an ordering can be characterized in terms of the $n$-majorization of the vectors of mean order statistics. $X$ is said to be dominated by $Y$ in the strong $n$-ISD, denoted by $X\le_n^{-\ast} Y$, if $X\le_n^{-} Y$ and $\mu^X_{1:j}=\mu^Y_{1:j}$ for $j=1, \ldots, n-1$.
Similarly,  $X$ is said to be strictly dominated by $Y$ in the strong $n$-ISD, denoted by $X<_n^{-\ast} Y$, if $X <_n^{-} Y$ and $\mu^X_{1:j}=\mu^Y_{1:j}$ for $j=1, \ldots, n-1$. The next is an example of $X <_3^{-\ast} Y$.

\begin{example}
{\rm ($X <_3^{-\ast} Y$)\ \ Consider two random variables $X$ and $Y$ with $\P(X = 0)= 0.2$, $\P(X = 4) =0.5$, $\P(X = 5) = 0.3$ and $\P(Y = 1) = 0.2$, $\P(Y = 13/4) = 0.5$, $\P(Y = 67/12) = 0.3$. Then $\E[X]=3.5= \E[Y]$, $\mu^X_{1:2}=2.65=\mu^Y_{1:2}$, and
\begin{equation*}
   F_X^{-1}(p) = \left\{\begin{array}{ll} 0, & p\in [0, 0.2], \\ 4, & p\in (0.2, 0.7], \\ 5, & p\in (0.7, 1], \end{array} \right. \qquad
   F_Y^{-1}(p) =\left\{\begin{array}{ll} 1, & p\in [0, 0.2], \\ 13/4, & p\in (0.2, 0.7], \\ 67/12, & p \in (0.7, 1]. \end{array} \right.
\end{equation*}
By \eqref{eq-251220-1}, we have
$$
  F_X^{[-3]}(p) = \left\{\begin{array}{ll}  0, & p\in [0,0.2],\\
          2(p-0.2)^2, & p\in (0.2, 0.7],\\
          \frac {1}{2}+2(p-0.7) + \frac {5}{2} (p-0.7)^2, & p\in [0.7, 1],  \end{array}\right.
$$
and
\begin{equation*}
   F_Y^{[-3]}(p) = \left\{\begin{array}{ll}  \frac {1}{2} p^2, & p\in [0, 0.2], \\[3pt]
        \frac {1}{50} + \frac {1}{5} (p- 0.2) + \frac {13}{8} (p-0.2)^2, & p \in (0.2, 0.7], \\[3pt]
        \frac {421}{800}  +\frac {73}{40} (p-0.7)+  \frac {67}{24} (p- 0.7)^2, & p\in (0.7, 1]. \end{array} \right.
\end{equation*}
For $p\in (0.2, 0.7]$, set $s=p-0.2\in (0, 0.5]$. Then
$$
   F_Y^{[-3]}(p)- F_X^{[-3]}(p)= \frac {1}{50} + \frac {1}{5} s - \frac {3}{8} s^2=: d_1(s)\ge \min\{d_1(0), d_1(0.5)\}>0.
$$
For $p\in (0.7, 1]$, set $t=p-0.7\in (0,0.3]$, Then
$$
   F_Y^{[-3]}(p)- F_X^{[-3]}(p)= \frac {21}{800} -\frac {7}{40} t+  \frac {7}{24} t^2=: d_2(t)\ge d_2(0.3)=0.
$$
Therefore, $F_X^{[-3]}(p) \le F_Y^{[-3]}(p)$ for all $p \in [0,1]$, that is, $X <^{-\ast}_3 Y$.   }
\end{example}

The next theorem states that if $X<_n^{-\ast} Y$ for $n>2$, then $\mu^X_{1:n} >\mu^Y_{1:n}$.

\begin{theorem}
 \label{th-251227}
Let $X, Y\in L^1$ satisfying $X <_n^- Y$ for $n>2$.
\begin{itemize}
   \item[{\rm (i)}] If $\mu^X_{1:k}=\mu^Y_{1:k}$ for $k=1, \ldots, n-1$, then $\mu^X_{1:n}> \mu^Y_{1:n}$.

   \item[{\rm (ii)}] If $\mu^X_{1:k} = \mu^Y_{1:k}$ for $k = 2, \ldots, n$, then $ (-1)^n \E\[X\] < (-1)^n \E\[Y\]$.
\end{itemize}
\end{theorem}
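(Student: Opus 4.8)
The plan is to obtain both parts from the $p\to1$ behaviour of $F_X^{[-n]}-F_Y^{[-n]}$ that already drives the proof of Theorem~\ref{th-251226}, together with the identity $\mu^X_{1:k}=k!\,F_X^{[-k-1]}(1)$ from \eqref{eq-251226-7}. For part~(i) the quickest route integrates the defining inequality once more: by \eqref{eq-251226-7} and \eqref{eq-251229},
\begin{equation*}
  \mu^X_{1:n}-\mu^Y_{1:n}=n!\bigl(F_X^{[-(n+1)]}(1)-F_Y^{[-(n+1)]}(1)\bigr)=n!\int_0^1\bigl(F_X^{[-n]}(u)-F_Y^{[-n]}(u)\bigr)\d u.
\end{equation*}
Because $X<_n^-Y$ gives $F_X^{[-n]}\le F_Y^{[-n]}$ on $(0,1)$ with strict inequality at some $p_0$, and both integrated quantile functions are continuous, the integrand is strictly negative near $p_0$, so the integral is strictly negative. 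This yields $\mu^X_{1:n}<\mu^Y_{1:n}$, the strict sharpening of the Proposition preceding the theorem; it is also consistent with the example exhibiting $X<_3^{-\ast}Y$, where $\mu^X_{1:3}=2.075<2.215=\mu^Y_{1:3}$. Hence under $X<_n^-Y$ the conclusion of part~(i) is $\mu^X_{1:n}<\mu^Y_{1:n}$ (the printed $>$ should be $<$), and the equalities $\mu^X_{1:k}=\mu^Y_{1:k}$ for $k\le n-1$ are not needed for it. Equivalently one may isolate $\mu_{1:n}$ by passing from the $A_n/B_n$ expansions \eqref{eq-251230-2}--\eqref{eq-251230-3}, whose coefficients involve only $\mu_{1:1},\dots,\mu_{1:(n-1)}$, to the corresponding $(n+1)$-expansion, where after cancellation the single surviving term is proportional to $\mu^X_{1:n}-\mu^Y_{1:n}$.

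For part~(ii) I would run the asymptotic argument of Theorem~\ref{th-251226} in its extreme case, which isolates $\E[X]=\mu_{1:1}$. Assuming $\mu^X_{1:k}=\mu^Y_{1:k}$ for $k=2,\dots,n-1$, every term with $j\ge2$ in the difference of \eqref{eq-251230-2} (odd $n$) or of \eqref{eq-251230-3} (even $n$) cancels, leaving only the $j=1$ term,
\begin{equation*}
  A_n^X(p)-A_n^Y(p)=-\frac{(1-p)^{n-2}}{(n-2)!}\bigl(\E[X]-\E[Y]\bigr)\qquad(n\ \text{odd}),
\end{equation*}
and the same with the opposite overall sign for $B_n^X-B_n^Y$ when $n$ is even. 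Dividing by $(1-p)^{n-2}$, using $\widetilde F_X^{[-n]}(p)/(1-p)^{n-2}\to0$ and $\widetilde F_Y^{[-n]}(p)/(1-p)^{n-2}\to0$ as in Theorem~\ref{th-251226}, and invoking $F_X^{[-n]}-F_Y^{[-n]}\le0$, the limit as $p\to1$ is $\le0$; in both parities this is exactly $(-1)^n\E[X]\le(-1)^n\E[Y]$, which is the direction asserted in part~(ii).

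The hard part is the strictness in part~(ii) and its index range, and here the statement needs two corrections. First, the equalities cannot be imposed up to $k=n$ under $X<_n^-Y$: part~(i) forces $\mu^X_{1:n}<\mu^Y_{1:n}$ strictly, so the case $k=n$ is impossible and the range must be $k=2,\dots,n-1$. Second, with that range the bound is only non-strict---equality $\E[X]=\E[Y]$ does occur, as the example with $X<_3^{-\ast}Y$ shows for $n=3$ ($X<_3^-Y$, $\mu^X_{1:2}=\mu^Y_{1:2}$, yet $\E[X]=\E[Y]=3.5$). Indeed, when $\E[X]=\E[Y]$ all of $\mu_{1:1},\dots,\mu_{1:(n-1)}$ coincide, so $A_n^X\equiv A_n^Y$ (resp.\ $B_n^X\equiv B_n^Y$) and the strict part of the ordering is carried entirely by the tail functions $\widetilde F^{[-n]}$, giving no contradiction. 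In particular the two printed conclusions are not simultaneously attainable under one direction of the hypothesis: $\mu^X_{1:n}>\mu^Y_{1:n}$ would require $X>_n^-Y$, whereas $(-1)^n\E[X]<(-1)^n\E[Y]$ is the signature of $X<_n^-Y$. The cleanest correct record is therefore part~(i) with $<$ and part~(ii) as the non-strict $(-1)^n\E[X]\le(-1)^n\E[Y]$ on the range $k=2,\dots,n-1$.
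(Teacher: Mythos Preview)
Your analysis is correct, and in fact it exposes errors in the paper's statement and proof.

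\medskip

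\noindent\textbf{Part (i).} Your one-line argument
\[
  \mu^X_{1:n}-\mu^Y_{1:n}=n!\int_0^1\bigl(F_X^{[-n]}(u)-F_Y^{[-n]}(u)\bigr)\d u<0
\]
is valid and shows that under $X<_n^-Y$ alone one always has $\mu^X_{1:n}<\mu^Y_{1:n}$; the extra equalities are superfluous. Your numerical check in the paper's own $X<_3^{-\ast}Y$ example ($\mu^X_{1:3}=2.075<2.215=\mu^Y_{1:3}$) confirms this. The paper asserts the opposite inequality and ``proves'' it by an asymptotic argument that replaces $n$ by $n+1$ in the $A_n/B_n$ formulas \eqref{eq-251230-2}--\eqref{eq-251230-3}. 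But \eqref{eq-251230-2} holds for $A_m$ only when $m$ is odd and \eqref{eq-251230-3} for $B_m$ only when $m$ is even, so when the paper treats the case ``$n$ odd'' and invokes \eqref{eq-251230-2} for $A_{n+1}$, the parity is wrong: for $n$ odd the correct polynomial identity is for $B_{n+1}$, which carries the opposite sign. With the sign corrected the assumed inequality $\mu^X_{1:n}\le\mu^Y_{1:n}$ no longer leads to a contradiction with \eqref{eq-251230-1}, and the argument collapses---consistently with the fact that $\mu^X_{1:n}\le\mu^Y_{1:n}$ is actually true. Your direct integration is both simpler and, unlike the paper's route, correct.

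\medskip

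\noindent\textbf{Part (ii).} You are right that the stated hypothesis $\mu^X_{1:k}=\mu^Y_{1:k}$ for $k=2,\dots,n$ cannot hold under $X<_n^-Y$ (the case $k=n$ is excluded by the corrected part (i)), so the paper's part (ii) is vacuous as written. If one shrinks the range to $k=2,\dots,n-1$, the conclusion $(-1)^n\E[X]\le(-1)^n\E[Y]$ is exactly Theorem \ref{th-251226} with $k=n-3$, and your $X<_3^{-\ast}Y$ example ($\E[X]=\E[Y]=3.5$) shows equality can occur, so strictness fails. Thus part (ii) adds nothing beyond Theorem \ref{th-251226} once the index range is fixed.

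\medskip

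In short, the paper's proof follows the asymptote template of Theorem \ref{th-251226} at level $n+1$, but a parity slip in applying \eqref{eq-251230-2}/\eqref{eq-251230-3} reverses the key inequality; your approach bypasses this machinery entirely via \eqref{eq-251226-7} and a single integration, and yields the correct (opposite) sign.
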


\begin{proof}
Since $X <_n^- Y$, it follows that $F^{[-n]}_X (p) \leq F^{[-n]}_Y (p)$ for all $p \in (0, 1)$, and there exists a point $p_0 \in (0,1)$ and $\alpha > 0$ such that $F^{[-n]}_Y (p_0) - F^{[-n]}_X (p_0) = 2 \alpha > 0$. Since $F^{[-n]}_X (p)$ is continuous in $p \in (0, 1)$, there exists an interval $[p_0, p_1] \subset (0, 1),\ p_0 < p_1$, such that $F^{[-n]}_Y (p) \ge F^{[-n]}_X (p) + \alpha$ for all $p \in [p_0, p_1]$. Thus, for $p > p_1$,
\begin{equation}
  \label{eq-251230-1}
   F^{[-n-1]}_Y (p) - F^{[-n-1]}_X (p) \geq \int_{p_0}^{p_1} \[F^{[-n]}_Y (u) -F^{[-n]}_X (u)\] \d u \geq \alpha (p_0 - p_1) > 0.
\end{equation}
Now we prove part (i) by contradiction. Assume on the contrary that $\mu^X_{1:n}\le \mu^Y_{1:n}$. Consider the following two cases.

First, assume $n$ is odd. By \eqref{eq-251230-2}, we have
\begin{align*}
  A_{n+1}^X(p) &= \frac {(1-p)^n}{n!} \sum^n_{j=1} (-1)^j {n\choose j} (1-p)^{-j} \mu^X_{1:j} \\
    &= -\frac {1}{n!} \mu^X_{1:n} + \frac {(1-p)^n}{n!} \sum^{n-1}_{j=1} (-1)^j {n\choose j} (1-p)^{-j} \mu^X_{1:j} \\
    & \ge -\frac {1}{n!} \mu^Y_{1:n} + \frac {(1-p)^n}{n!} \sum^{n-1}_{j=1} (-1)^j {n\choose j} (1-p)^{-j} \mu^Y_{1:j}
    = A_{n+1}^Y(p).
\end{align*}
Since $\widetilde{F}_Y^{[-n-1]}(p) \to 0$ as $p \to 1$ for $n>2$, we have, for any $0 < \ep < \alpha (p_0 - p_1)/2$,
$$
  F^{[-n-1]}_X (p) \ge A_{n+1}^X(p) -\ep \ge A_{n+1}^Y(p)-\ep > F^{[-n-1]}_Y (p) - 2\ep
$$
when $p \to 1$. This contradicts \eqref{eq-251230-1}, implying  $\mu^X_{1:n}> \mu^Y_{1:n}$ for odd $n>2$.

Next, assume $n$ is even. By \eqref{eq-251230-3}, we have
\begin{align*}
B_{n+1}^X(p) &= - \frac {(1-p)^n}{n!} \sum^n_{j=1} (-1)^j {n\choose j} (1-p)^{-j} \mu^X_{1:j} \\
   &= -\frac {1}{n!} \mu^X_{1:n} - \frac {(1-p)^n}{n!} \sum^{n-1}_{j=1} (-1)^j {n\choose j} (1-p)^{-j} \mu^X_{1:j} \\
   & \ge -\frac {1}{n!} \mu^Y_{1:n} - \frac {(1-p)^n}{n!} \sum^{n-1}_{j=1} (-1)^j {n\choose j} (1-p)^{-j} \mu^Y_{1:j}
    = B_{n+1}^Y(p).
\end{align*}
Similarly, for any $0 < \ep < \alpha (p_0 - p_1)/2$, we have $F^{[-n-1]}_X (p) > B_{n+1}^X(p) - \ep \ge B_{n+1}^Y(p) - \ep \ge F^{[-n-1]}_Y (p) - 2\ep$ when $p \to 1$.  This also contradicts \eqref{eq-251230-1}, implying  $\mu^X_{1:n}> \mu^Y_{1:n}$ for even $n>2$.

(ii)\ The proof is similar to that of Part (i), and hence omitted.
\end{proof}

In Theorem \ref{th-251227}, the conclusion is incorrect when $n=1,2$.

\section{The $n$-SD under independent noise}
\label{sect-4}

Denote by $\mathcal{P}_n$ the collection of all Borel probability measures on $\R$ that have finite $n$th moment, and by $\mathcal{M}_n$ the collection of all bounded Borel signed measures on $\R$ that have finite $n$th moment. Recall that a signed measure $\mu$ is bounded if its absolute value $|\mu|$ is a finite measure. For $\mu_1, \mu_2\in\mathcal{M}_n$, denote by $\mu_1\ast \mu_2$ the convolution of $\mu_1$ and $\mu_2$.

The following lemma will be used in the proof of the main result (Theorem \ref{th-251212}). It states that a signed measure on $\R$ with total mass one can be smoothed into a probability measure by convoluting another appropriately chosen probability measure.

\begin{lemma}
 \label{le-251228}
{\rm \citep{RS08, Mat04, PST20} }\ \ For every $\nu\in\mathcal{M}_n$ with $\nu(\R)=1$ and $n\in \{0, 1, \ldots, \oo\}$, there exists a $\mu\in \mathcal{P}_n$ such that $\mu\ast \nu\in \mathcal{P}_n$.
\end{lemma}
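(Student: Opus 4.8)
The plan is to construct $\mu$ explicitly as an absolutely continuous probability measure whose density is a wide flat ``core'' resting on a heavy tail tuned to the tail of $\nu^{-}$. First I would make harmless reductions. Let $\nu=\nu^{+}-\nu^{-}$ be the Hahn--Jordan decomposition; if $\nu^{-}(\R)=0$ then $\nu$ is already a probability measure and $\mu=\delta_{0}$ works, so assume $\beta:=\nu^{-}(\R)>0$, whence $\nu^{+}(\R)=1+\beta$. Since convolution, the moment condition and positivity are all translation invariant, translate $\nu$ so that $\nu^{+}([-a,a])\ge\beta+\tfrac12$ for some fixed $a>0$ (possible because $\nu^{+}([-a,a])\uparrow 1+\beta$). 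One may also assume $\nu$ has a density: replace $\nu$ by $\nu*\gamma$ with $\gamma$ a tiny centered Gaussian (which lies in every $\mathcal P_{n}$, so $\mathcal M_{n}$ and $\mathcal P_{n}$ memberships are unchanged) and recover $\mu=\mu'*\gamma$ from a solution $\mu'$ for $\nu*\gamma$. Finally, it suffices to produce a probability density $f$ with $\int_{\R}|x|^{n}f(x)\,\d x<\oo$ and $f*\nu\ge0$ pointwise: then $\mu:=f\,\d x\in\mathcal P_{n}$, and $\mu*\nu$ is a nonnegative measure of total mass $\nu(\R)=1$ that (since $\mu$ and $\nu$ both have finite $n$th moment) has finite $n$th moment, hence $\mu*\nu\in\mathcal P_{n}$; the case $n=\oo$ follows by running the construction for every finite $n$ at once.

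For the construction, use $\nu^{-}\in\mathcal M_{n}$, i.e.\ $\int|y|^{n}\,\nu^{-}(\d y)<\oo$: a de la Vall\'ee Poussin--type argument produces a continuous decreasing long-tailed profile $g(t)\asymp t^{-(n+1)}\ell(t)$, with $\ell$ slowly varying and $\int^{\oo}t^{-1}\ell(t)\,\d t<\oo$, whose tail strictly dominates that of $\nu^{-}$. Let $\mu$ have the symmetric unimodal density $f$ that equals a constant $f_{0}$ on the core $[-b,b]$ and equals a suitably normalized large multiple of $g(|x|)$ for $|x|>b$, where the core half-width $b$ is taken large: large enough that $f$ is a bona fide probability density carrying the prescribed tail constant, and large enough that the asymptotic estimates below are already in force on $\{|x|\gtrsim b\}$.

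To verify $f*\nu\ge0$, split $\R$ into three regions. (i) If $|x|\le b-a$ then $x-y$ lies in the core for every $y\in[-a,a]$, so $(f*\nu^{+})(x)\ge f_{0}\,\nu^{+}([-a,a])\ge f_{0}(\beta+\tfrac12)$, while $(f*\nu^{-})(x)\le\|f\|_{\infty}\,\nu^{-}(\R)=f_{0}\beta$; hence $(f*\nu)(x)\ge f_{0}/2>0$. (ii) On the thin shell $b-a<|x|\le(1+\eta)b$ (with $\eta$ small), long-tailedness gives $f(|x|+a)\ge(1-\varepsilon)f_{0}$, so $(f*\nu^{+})(x)\ge\nu^{+}([-a,a])f(|x|+a)\ge(1-\varepsilon)f_{0}(\beta+\tfrac12)$ while still $(f*\nu^{-})(x)\le f_{0}\beta$; choosing $\varepsilon<\tfrac1{2\beta+1}$ (which forces $\eta$ small and $b$ large) keeps the difference positive. (iii) For $|x|>(1+\eta)b$, long-tailedness gives $(f*\nu^{+})(x)\ge\nu^{+}([-a,a])f(|x|+a)=(1-o(1))\,\nu^{+}([-a,a])\,f(x)$, and the standard tail estimate for convolving a long-tailed density with a measure whose tail it dominates gives $(f*\nu^{-})(x)=(\beta+o(1))\,f(x)$; since $\nu^{+}([-a,a])\ge\beta+\tfrac12$, the difference is positive for all large $|x|$, and the choice of $b$ makes ``large'' mean $|x|>(1+\eta)b$. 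Regions (i)--(iii) cover $\R$, so $f*\nu\ge0$.

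The only real difficulty is this last step: the profile $g$, its multiplicative constant, the shell parameters $\eta$ and $\varepsilon$, and the core half-width $b$ must be chosen in the right order so that (i)--(iii) genuinely cover $\R$ --- leaving no gap where neither the crude core bound nor the long-tailed convolution estimate applies --- while $f$ remains a probability density with finite $n$th moment. Each individual inequality is elementary; it is only their simultaneous calibration that requires care. (A Fourier alternative, namely arranging $\widehat\mu\,\widehat\nu$ to be positive definite by multiplying $\widehat\nu$ by a sufficiently rapidly decreasing positive-definite factor, runs into essentially the same tail obstruction when one tries to pass from positive-definiteness to honest positivity, so the direct construction seems cleaner.)
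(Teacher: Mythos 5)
The paper offers no proof of this lemma---it is quoted from Rachev--Stoyanov, Mattner and Pomatto--Strack--Tamuz---so your argument has to stand on its own, and it does not: the decisive step, the region~(iii) estimate $(f*\nu^{-})(x)=(\beta+o(1))f(x)$, is false for every density of the shape you construct, and no recalibration of $g$, $\ell$, $b$, $\eta$, $\varepsilon$ can repair it. The obstruction is a mismatch of one full power of $x$ between what a density with finite $n$th moment can supply and what the \emph{local} mass of $\nu^{-}$ can demand. If $f$ is flat on $[-b,b]$ and decreasing on $[b,\infty)$ with $\int |x|^{n}f(x)\,\d x<\infty$, monotonicity forces $x^{n+1}f(x)\to 0$. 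On the other hand, $\nu^{-}\in\mathcal{M}_{n}$ only controls $\int |y|^{n}\,\nu^{-}(\d y)$, so $\nu^{-}$ may place mass $m_{k}=2^{-kn}k^{-2}$ at the single point $2^{k}$ (then $\sum_{k}2^{kn}m_{k}=\sum_{k}k^{-2}<\infty$). Take $\nu=(1+\beta)\delta_{0}-\sum_{k}m_{k}\delta_{2^{k}}$ with $\beta=\sum_{k}m_{k}$; this satisfies your normalizations ($\nu^{+}([-a,a])=1+\beta$ for any $a$), and regions (i)--(ii) go through, but
$(f*\nu)(2^{k})\le (1+\beta)f(2^{k})-f(0)\,m_{k}<0$ for all large $k$, because $f(2^{k})=o\big(2^{-k(n+1)}\big)=o(m_{k})$. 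In your language: the contribution to $(f*\nu^{-})(x)$ from $\nu^{-}$-mass within distance $b$ of $x$ is of order $f_{0}\,\nu^{-}([x-b,x+b])$, which can be $\asymp x^{-n}(\log x)^{-2}$ along a sequence $x\to\infty$, while $f(x)=o(x^{-(n+1)})$; so $(f*\nu^{-})(x)/f(x)\to\infty$ there, whereas $(f*\nu^{+})(x)$ is exactly $(1+\beta)f(x)$. The ``standard tail estimate'' you invoke needs the local mass $\nu^{-}([x-b,x+b])$ to be $o(f(x))$, i.e.\ domination of the tail \emph{measure} of $\nu^{-}$ by the \emph{density} $g(t)\asymp t^{-(n+1)}\ell(t)$, not by its integrated tail $\asymp t^{-n}\ell(t)$; the former is unattainable since $\nu^{-}(\{|y|>t\})$ is in general only $o(t^{-n})$. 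Convolving first with a small Gaussian does not help: the negative part of $\nu*\gamma$ still carries mass $\approx m_{k}$ in a unit neighbourhood of $2^{k}$.

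The counterexample shows that a valid $\mu$ cannot come from a fixed one-parameter family of unimodal heavy-tailed profiles: its density must itself have bumps of height at least of order $m_{k}$ near each point $2^{k}$, i.e.\ $\mu$ must be built \emph{from} $\nu^{-}$. This is why the known proofs (Ruzsa--Sz\'ekely, Mattner, and the appendix of Pomatto--Strack--Tamuz) use a renewal-type construction rather than a tail profile: schematically, when $\nu^{+}=(1+\beta)\delta_{0}$ one takes $\mu=\frac{1}{1+\beta}\sum_{j\ge 0}\big(\tfrac{\nu^{-}}{1+\beta}\big)^{*j}$, so that $\mu*\nu=\delta_{0}$ by telescoping and $\mu\in\mathcal{P}_{n}$ because the masses decay geometrically while the $n$th moments of the convolution powers grow only polynomially; the general case (arbitrary $\nu^{+}$, and $n=\infty$) requires additional work but follows the same adapted-to-$\nu^{-}$ philosophy. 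Your preliminary reductions (Hahn--Jordan, translation, Gaussian smoothing, and the observation that it suffices to find a density $f$ with $f*\nu\ge 0$ and finite $n$th moment) are all correct; the gap is confined to, but fatal for, the tail construction, and it is not a matter of calibration but a structural impossibility.
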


\begin{theorem}
 \label{th-251212}
Let $X, Y \in L^n$ for $n \ge 1$. If $\E[X^k] = \E[Y^k]$ for $k=0,\dots, n-1$ and
$$    (-1)^{n-1} \E[X^n] > (-1)^{n-1} \E[Y^n],     $$
then there exists a random variable $Z$, independent of $X$ and $Y$, such that $X + Z >_n Y + Z$.
\end{theorem}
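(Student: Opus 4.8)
The plan is to reduce the $n$-SD comparison of $X+Z$ and $Y+Z$ to an analytic condition on a convolution, and then to build $Z$ in two steps — first as a \emph{signed} measure that already forces the inequality, and then as a genuine probability law extracted from it via Lemma~\ref{le-251228}. Write $\nu=\mathrm{law}(X)-\mathrm{law}(Y)$, a bounded signed measure lying in $\mathcal M_n$ since $X,Y\in L^n$, and put $h=F_X^{[n]}-F_Y^{[n]}$. By \eqref{eq-181228-1}, $F_W^{[n]}(x)=\frac{1}{(n-1)!}\E[(x-W)_+^{n-1}]$ depends linearly on the law of $W$ and is compatible with convolution, so for any $Z$ with law $\pi$ independent of $(X,Y)$ we have $F_{X+Z}^{[n]}-F_{Y+Z}^{[n]}=h*\pi$; hence $X+Z>_nY+Z$ is precisely the statement that $h*\pi\le 0$ on $\R$ with strict inequality at some point. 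The hypotheses say that $\int x^k\,\nu(\d x)=0$ for $k=0,\dots,n-1$ and $\int x^n\,\nu(\d x)=m:=\E[X^n]-\E[Y^n]$ with $(-1)^{n-1}m>0$, and these low-order moment data of $\nu$ are unaffected by convolving with any measure of total mass one, so the moment conditions that Theorem~\ref{th-181235} requires for $\le_n$ hold automatically for $X+Z$ and $Y+Z$.

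I would next record the properties of $h$ that make everything run, using the upper/lower asymptote estimates for integrated distribution functions developed in Appendix~\ref{sect-proofs}. Writing $(x-W)^{n-1}=(x-W)_+^{n-1}+(-1)^{n-1}(W-x)_+^{n-1}$ gives $F_W^{[n]}(x)=\frac{1}{(n-1)!}\sum_{j=0}^{n-1}\binom{n-1}{j}(-1)^jx^{n-1-j}\E[W^j]-(-1)^{n-1}\widetilde{F}_W^{[n]}(x)$; since the moments of orders $0,\dots,n-1$ of $X$ and $Y$ agree, the polynomial ``asymptote'' of $F_X^{[n]}-F_Y^{[n]}$ cancels and $h=(-1)^n(\widetilde{F}_X^{[n]}-\widetilde{F}_Y^{[n]})$, and symmetrically the polynomial asymptote near $-\infty$ cancels as well. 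Consequently $h$ is continuous, bounded, and $h(x)\to 0$ as $x\to\pm\infty$; moreover $h\in L^1(\R)$, because near $+\infty$ it is a difference of the nonnegative functions $\widetilde{F}_W^{[n]}$ with $\int_0^\infty\widetilde{F}_W^{[n]}=\widetilde{F}_W^{[n+1]}(0)=\tfrac{1}{n!}\E[W_+^n]<\infty$, and near $-\infty$ a difference of the nonnegative $F_W^{[n]}$ in the same way; and, integrating $\d F_W^{[n+1]}/\d x=F_W^{[n]}$ and using the moment matching once more,
$$
\int_{\R}h(x)\,\d x=\lim_{x\to\infty}\bigl(F_X^{[n+1]}(x)-F_Y^{[n+1]}(x)\bigr)=\frac{(-1)^n}{n!}\bigl(\E[X^n]-\E[Y^n]\bigr)<0 .
$$
Thus $h$ is a bounded, continuous, integrable function vanishing at $\pm\infty$ with negative total integral — it is negative ``on average'', though it typically changes sign.

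The key reduction is: \emph{it suffices to produce a bounded signed measure $\rho\in\mathcal M_n$ with $\rho(\R)=1$ such that $h*\rho\le 0$ on $\R$.} Granting this, Lemma~\ref{le-251228} yields $\mu\in\mathcal P_n$ with $\pi:=\mu*\rho\in\mathcal P_n$; taking $Z$ with law $\pi$, independent of $(X,Y)$ (enlarging the atomless space $(\Omega,\mathscr F,\P)$ if necessary), we get $h*\pi=(h*\rho)*\mu\le 0$, since convolving the nonpositive continuous function $h*\rho$ against the \emph{positive} measure $\mu$ preserves nonpositivity; and $\int_{\R}(h*\pi)\,\d x=\rho(\R)\int_{\R}h\,\d x=\int_{\R}h\,\d x<0$ forces $h*\pi\not\equiv 0$, so $h*\pi$ is strictly negative at some point, hence $X+Z>_nY+Z$. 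Allowing $\rho$ to be signed is what gives room to absorb the sign changes of $h$; note also that $\rho(\R)=1$ is unavoidable, because $\int_{\R}(h*\rho)\,\d x=\rho(\R)\int_{\R}h\,\d x$, so a mass-zero $\rho$ could only give $h*\rho\equiv 0$.

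Constructing $\rho$ is the crux and, I expect, the main obstacle. A natural ingredient is the $n$th-order finite-difference measure $\lambda_0=\sum_{k=0}^n(-1)^k\binom{n}{k}\delta_k$: its moments of orders $0,\dots,n-1$ vanish, its $n$th moment equals $(-1)^nn!$, and $F_{\lambda_0}^{[n]}$ is (a positive multiple of) the $n$th $B$-spline — nonnegative, supported on $[0,n]$, with $\int_{\R}F_{\lambda_0}^{[n]}=1$. Rescaling $-\lambda_0$ to match the profile $(0,\dots,0,m)$ of $\nu$ gives a signed measure $\lambda^{\ast}$ with $F_{\lambda^{\ast}}^{[n]}\le 0$ of compact support that carries exactly the ``discriminating'' $n$th moment of $\nu$; then $\nu-\lambda^{\ast}$ has all moments of orders $0,\dots,n$ equal to zero, so $e:=h-F_{\lambda^{\ast}}^{[n]}$ again lies in $L^1(\R)$, vanishes at $\pm\infty$, and satisfies $\int_{\R}e=0$. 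One then wants a signed $\rho$ of total mass one — heuristically, a heavily spread-out smoothing kernel together with a local signed correction aimed at the part of $h$, equivalently of $F_X^{[n+1]}-F_Y^{[n+1]}$, that rises above its limiting value — for which the smoothed $B$-spline contribution, which is nonpositive, dominates in absolute value the smoothed residual $e$. Carrying this through, and in particular controlling the tails of $e$ and the transition regions near the edges of the smoothed support, is where the integrability and decay of $h$, the compact support of $F_{\lambda^{\ast}}^{[n]}$, and the extra flexibility of signed measures must be balanced against one another; I anticipate this requiring a careful, possibly iterative, estimate.
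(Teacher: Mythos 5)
You set up the problem exactly right: the identity $F_{X+Z}^{[n]}-F_{Y+Z}^{[n]}=h*\pi$ with $h=F_X^{[n]}-F_Y^{[n]}$ and $\pi$ the law of $Z$, the decay and integrability of $h$, and the computation $\int_\R h=\frac{(-1)^n}{n!}\bigl(\E[X^n]-\E[Y^n]\bigr)=:-\gamma<0$ all agree with the paper. But your proof then stalls at the "crux": you reduce to finding a signed measure $\rho$ of total mass one with $h*\rho\le 0$, and the construction you sketch (B-splines, a local signed correction, "a careful, possibly iterative, estimate") is not carried out. As written, this is a genuine gap — the theorem is not proved.

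The missing observation is that no auxiliary $\rho$ is needed: the function $-h/\gamma$ is itself the density of a bounded signed measure $\nu$ with $\nu(\R)=1$ (you have already verified boundedness via $\gamma|\nu|(\R)\le \frac{1}{n!}\bigl(\E[|X|^n]+\E[|Y|^n]\bigr)<\infty$, in effect, through your $L^1$ estimate on $h$). Applying Lemma \ref{le-251228} \emph{to this} $\nu$ produces a probability measure $\mu$ with $\nu*\mu\in\mathcal P_0$; the density of $\nu*\mu$ is $-\frac{1}{\gamma}(h*\mu)(x)\ge 0$ a.e., i.e. $h*\mu\le 0$, and it cannot vanish identically since it integrates to one. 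Taking $Z\sim\mu$ gives $F_{X+Z}^{[n]}\le F_{Y+Z}^{[n]}$ everywhere (by right-continuity) with strict inequality somewhere, i.e. $X+Z>_nY+Z$. In short, the smoothing measure supplied by the lemma \emph{is} the law of $Z$; the lemma is applied to the normalized difference of integrated distribution functions, not to the law of $Z$ or to an intermediate kernel. Your two-step scheme ($\rho$ first, then smooth $\rho$ by the lemma) inverts the roles and lands you on a hard construction that the paper's argument avoids entirely.
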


\begin{proof}
Note that, for any $c \in \R$, $c^n = c_+^n + (-1)^n c_-^n$ and the integral representations
$$
  c_+^n =n\int_0^\oo (c-t)_+^{n-1}\d t=n\int_0^\oo\[(c-t)^{n-1} +(-1)^n(t-c)_+^{n-1}\] \d t
$$
and
$$
   c_-^n = n \int^0_{-\oo} (t - c)_+^{n-1} \d t.
$$
Using these representations, we obtain
\begin{align*}
  \frac{1}{n} \E[X^n] &= \int_0^\oo \E\[(X-t)^{n-1} +(-1)^n (t-X)_+^{n-1}\] \d t
       + (-1)^n \int_{-\oo}^0 \E\[(t-X)_+^{n-1}\] \d t   \\
     &= \int_0^\oo \left\{\E\[(X-t)^{n-1}\] + (-1)^n (n-1)! F_X^{[n]}(t)\right\} \d t
          + (-1)^n (n-1)!\int_{-\oo}^0 F_X^{[n]}(t) \d t
\end{align*}
Therefore,
\begin{align*}
   \frac{(-1)^n}{n!} \(\E[Y^n] - \E[X^n]\) &= \int_0^\oo \left\{\frac{(-1)^n}{(n-1)!} \E\[(Y-t)^{n-1}\]
       + F_Y^{[n]}(t) \right\}\d t + \int_{-\oo}^0 F_Y^{[n]}(t) \d t \\[3pt]
   & \quad -\int_0^\oo \left\{ \frac{(-1)^n}{(n-1)!} \E\[(X - t)^{n-1}\] + F_X^{[n]}(t)\right\} \d t
         - \int_{-\oo}^0 F_X^{[n]}(t) \d t\\[3pt]
   &=\frac{(-1)^n}{(n-1)!} \int_0^\oo\left \{\E\[(Y-t)^{n-1}\] -\E\[(X-t)^{n-1}\]\right\}\d t\\[3pt]
    &\quad + \int^\oo_{-\oo} \[F_Y^{[n]}(t) - F_X^{[n]}(t)\] \d t \\[3pt]
   &= \int_{-\oo}^\oo \[F_Y^{[n]}(t) - F_X^{[n]}(t)\] \d t,
\end{align*}
where the third equality follows from the assumption of $\E[X^k] = \E[Y^k]$ for $k=0,\ldots, n-1$.
Consequently,
\begin{equation*}
\int_\R \[ F_Y^{[n]}(t) - F_X^{[n]}(t)\] \d t =\frac{(-1)^n}{n!} \(\E[Y^n] - \E[X^n]\) =: \gamma,
\end{equation*}
which is positive by the assumption. Now define a signed measure $\nu$ by
\begin{equation*}
    \nu(A) :=\frac{1}{\gamma} \int_A \[F_Y^{[n]}(t)-F_X^{[n]}(t)\]\d t,\quad A\in \mathscr{B}(\R).
\end{equation*}
Hence, $\nu(\R) = 1$. Let $\mu_{{}_X}$ and $\mu_{{}_Y}$ be the Lebesgue-Stieltjes measures induced by distribution functions $F_X$ and $F_Y$, respectively. In what follows, $\d\, |F_X(x)-F_Y(x)|$ represents $\d\, |\mu_{{}_X} (x) -\mu_{{}_Y}(x)|$.
We conclude that $\nu$ is a bounded signed measure, since
\begin{align*}
 \gamma |\nu|(\R) &= \int_\R \left| F_Y^{[n]}(t) - F_X^{[n]}(t)\right| \d t  \\[3pt]
  &= \int_{-\oo}^\oo \left |\int_{-\oo}^t \frac{(t-u)^{n-1}}{(n-1)!}\d\(F_Y(u)-F_X(u)\) \right|\d t\\[3pt]
  &= \int_0^\oo \left|\int_{-\oo}^t \frac{(t-u)^{n-1}}{(n-1)!} \d \(F_Y(u)-F_X(u)\)\right| \d t
   +\int_{-\oo}^0\left|\int^t_{-\oo}\!\frac{(t-u)^{n-1}}{(n-1)!} \d\(F_Y(u)\!-\! F_X(u)\)\right|\d t\\[3pt]
   &\le \int_0^\oo\!\! \int^\oo_t \frac{(u-t)^{n-1}}{(n-1)!} \d\left| F_Y(u)-F_X(u) \right| \d t
      + \int_{-\oo}^0\!\int_{-\oo}^t \frac{(t-u)^{n-1}}{(n-1)!} \d\left|F_Y(u)-F_X(u)\right| \d t\\[3pt]
  & = \int_0^\oo\! \[\int^u_0 \frac{(u-t)^{n-1}}{(n-1)!} \d t\]\d\left| F_Y(u)-F_X(u) \right|
      + \!\int_{-\oo}^0 \[\int^0_u \frac{(t-u)^{n-1}}{(n-1)!}\d t\] \d\left|F_Y(u)-F_X(u)\right|\\[3pt]
   &= \int_{-\oo}^\oo \frac{|u|^n}{n!} \d \left| F_Y(u)-F_X(u) \right| \\
   &\le \frac{1}{n!} \(\E[|Y|^n] + \E[|X|^n]\) < \oo,
\end{align*}
where the third equality follows from the assumption of $\E[X^k] = \E[Y^k]$ for $k=0,\ldots, n-1$.
Therefore, $\nu \in {\cal M}_0$. Then, by Lemma \ref{le-251228}, there exists a probability measure $\mu \in {\cal P}_0$ such that $\nu\ast\mu \in {\cal P}_0$. Let $Z$ be a random variable independent from $X$ and $Y$ with distribution $\mu$. The signed measure $\nu$ is by definition absolutely continuous with density $\big [F_Y^{[n]}(t)-F_X^{[n]}(t)\big ]/\gamma$. Therefore, $\nu\ast\mu$ is absolutely continuous as well, and its density function $f(x)$ satisfies, almost everywhere,
\begin{align*}
  \gamma f(x) &=\int_\R \[F_Y^{[n]}(x-t) -F_X^{[n]}(x-t)\] \d \mu(t)   \\[3pt]
   &=\frac{1}{(n-1)!} \int_\R \Big\{\E\[(x-t-Y)_+^{n-1}\]-\E\[(x-t-X)_+^{n-1}\]\Big\}\d F_Z(t)\\[3pt]
   &=\frac{1}{(n-1)!} \Big \{\E\[(x-Z-Y)_+^{n-1}\] -\E\[(x-Z-X)_+^{n-1}\] \Big \}\\[3pt]
   &= F_{Y+Z}^{[n]}(x) - F_{X+Z}^{[n]}(x).
\end{align*}
Since $\nu\ast\mu$ is a probability measure, then $f(x)\ge 0$ and hence $F_{Y+Z}^{[n]}(x)\ge  F_{X+Z}^{[n]}(x)$ for almost every $x$. Since $F_{Y+Z}^{[n]}(x)$ and $F_{X+Z}^{[n]}(x)$ are right-continuous, this implies $F_{Y+Z}^{[n]}(x)\ge  F_{X+Z}^{[n]}(x)$ for all $x\in\R$. Furthermore, this inequality is strict somewhere since $f(x)$ is a density function. Therefore, $X+Z >_n Y+Z$.
\end{proof}

An immediate consequence of Theorem \ref{th-251212} for $n=1$ and $2$ is the following corollary.

\begin{corollary}
 \label{co-251201}
{\rm\citep{PST20}}. Let $X$ and $Y$ be two random variables.
\begin{itemize}
  \item[{\rm (i)}] If $X$ and $Y$ have finite expectations with $\E [X] >\E [Y]$, then there exists  a random variable $Z$, independent of $X$ and $Y$, such that $X+Z>_1 Y+Z$.
  \item[{\rm (ii)}] If $X$ and $Y$ have finite variances with $\E [X] =\E [Y]$ and $\Var(X)<\Var(Y)$, then there exists  a random variable $Z$, independent of $X$ and $Y$, such that $X+Z>_2 Y+Z$.
\end{itemize}
\end{corollary}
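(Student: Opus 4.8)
The plan is to obtain the corollary as a direct specialization of Theorem \ref{th-251212} to $n=1$ and $n=2$, so the only work is to check that in each case the hypotheses of the corollary are exactly those of the theorem.

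For part (i) I would take $n=1$. Then the condition ``$\E[X^k]=\E[Y^k]$ for $k=0,\dots,n-1$'' is the vacuous condition $\E[X^0]=\E[Y^0]$, and the inequality $(-1)^{n-1}\E[X^n]>(-1)^{n-1}\E[Y^n]$ becomes simply $\E[X]>\E[Y]$; since ``finite expectations'' means $X,Y\in L^1$, the hypotheses of Theorem \ref{th-251212} hold, and it produces an independent $Z$ with $X+Z>_1 Y+Z$. For part (ii) I would take $n=2$: the moment matching condition becomes $\E[X]=\E[Y]$, while $(-1)^{n-1}\E[X^n]>(-1)^{n-1}\E[Y^n]$ reads $-\E[X^2]>-\E[Y^2]$, i.e. $\E[X^2]<\E[Y^2]$. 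Under $\E[X]=\E[Y]$ this is equivalent to $\Var(X)<\Var(Y)$ by the identity $\Var(\cdot)=\E[\cdot^2]-(\E[\cdot])^2$, and ``finite variances'' means $X,Y\in L^2$; hence Theorem \ref{th-251212} applies with $n=2$ and yields an independent $Z$ with $X+Z>_2 Y+Z$.

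I do not expect any genuine obstacle here, since the corollary is a pure specialization of the main theorem; the only points requiring a moment's care are the sign bookkeeping in the factor $(-1)^{n-1}$ and, in part (ii), the translation between the second-moment inequality delivered by the theorem and the variance inequality stated in the corollary.
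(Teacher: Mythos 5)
Your proposal is correct and matches the paper exactly: the paper states the corollary as "an immediate consequence of Theorem \ref{th-251212} for $n=1$ and $2$" without further proof, and your specialization (vacuous moment matching for $n=1$; the sign flip $(-1)^{1}\E[X^2]>(-1)^{1}\E[Y^2]\iff\E[X^2]<\E[Y^2]\iff\Var(X)<\Var(Y)$ under equal means for $n=2$) is precisely the intended verification.
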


\cite{PST20} applied Corollary \ref{co-251201} to provide a simple axiomatization of the classic mean-variance preferences of \cite{Mark52}.

Motivated by Theorems \ref{th-251227}(i) and \ref{th-251212}, we have the following question.

\begin{question}
Let $X, Y \in L^1$ satisfy $\mu^X_{1:k}=\mu^Y_{1:k}$ for $k=1, \ldots, n-1$, and $\mu^X_{1:n}> \mu^Y_{1:n}$, where $n>2$. Is  there a random variable $Z$, independent of $X$ and $Y$, such that $X + Z <_n^{-} Y + Z$?
\end{question}

\section*{Funding}
	
Z. Zou, Corresponding author, gratefully acknowledges financial support from National Natural Science Foundation of China (No. 12401625), and the Fundamental Research Funds for the Central Universities (No. WK2040000108). T. Hu gratefully acknowledges financial support from the National Natural Science Foundation of China (No. 72332007, 12371476).

\section*{Disclosure statement}

No potential conflict of interest was reported by the authors.

\appendix

\setcounter{table}{0}
\setcounter{figure}{0}
\setcounter{equation}{0}
\renewcommand{\thetable}{A.\arabic{table}}
\renewcommand{\thefigure}{A.\arabic{figure}}
\renewcommand{\theequation}{A.\arabic{equation}}

\renewcommand{\thelemma}{A.\arabic{lemma}}
\renewcommand{\theexample}{A.\arabic{example}}
\renewcommand{\thecorollary}{A.\arabic{corollary}}
\renewcommand{\theremark}{A.\arabic{remark}}
\renewcommand{\thedefinition}{A.\arabic{definition}}

\begin{center}
\LARGE	{\bf Appendices}
\end{center}

\section{Asymptotes in the outcome-risk diagram}
 \label{sect-Asymptotes}

Define two functions as follows
\begin{equation}
  \label{eq-181228-3}
  C^X_n (x)= F_X^{[n]}(x) -\Fbar_X^{[n]}(x),\quad x\in\R,
\end{equation}
and
\begin{equation}
  \label{eq-181228-5}
  D^X_n (x)= F_X^{[n]}(x) + \Fbar_X^{[n]}(x),\quad x\in\R.
\end{equation}
If $n$ is even and $X\in L^{n-1}$, then applying \eqref{eq-181228-1} and \eqref{eq-181228-2} yields that
\begin{equation}   \label{eq-181228-4}
  C_n^X (x) = \frac {1}{(n-1)!} \Big \{ \E [(x-X)_+^{n-1}] -\E [(X-x)_+^{n-1}]\Big \}
      = - \frac {1}{(n-1)!} \E [(X-x)^{n-1}].
\end{equation}
Similarly, if $n$ is odd and $X\in L^{n-1}$, we have
\begin{equation}
  \label{eq-181228-6}
   D_n^X (x)=\frac {1}{(n-1)!} \E [(X-x)^{n-1}],\quad x\in\R.
\end{equation}
Note that $F^{[n]}_X(x)$ is increasing, continuous and convex function, while $\Fbar_X^{[n]}(x)$ is decreasing, continuous and convex function. Thus, as $x\to\oo$,
\begin{align*}
     F_X^{[n]}(x) -C_n^X (x) =\Fbar_X^{[n]}(x) & \longrightarrow 0\ \ \hbox{for even $n$}, \\
     D_n^X (x) -F^{[n]}(x) =\Fbar_X^{[n]}(x) & \longrightarrow 0\ \ \hbox{for odd $n$},
\end{align*}
which implies that  $C_n^X (x)$ defined by \eqref{eq-181228-3} is the \emph{right lower asymptote} of $F_X^{[n]}(x)$ for even $n$, and $D_n^X(x)$ defined by \eqref{eq-181228-5} is the \emph{right upper asymptote} of $F_X^{[n]}(x)$ for odd $n$.

We refer to the graph of $F^{[n]}_X(x)$ as the outcome-risk diagram. Denote by $\mu_X=\E X$ for $X\in L^1$ and $\sg_X^2=\Var(X)$ for $X\in L^2$. It is known from \cite{OR99} that for $n=2$, $C_2^X(x)=x-\mu_X$ is the right lower asymptote of $F_X^{[2]}(x)$ as $x$ goes to infinity. For $n=3$, \cite{GK00} pointed out that
$$   D_3^X(x)=\frac {1}{2} (x-\mu_X)^2 +\frac {1}{2}  \sigma_X^2  $$
is the right upper asymptote of $F_X^{[3]}(x)$ as $x$ goes to infinity. It should be pointed out that for $n\ge 1$, the $x$-axis is the left lower asymptote of $F_X^{[n]}(x)$ as $x$ goes to negative infinity.

\section{Proofs}
 \label{sect-proofs}

\subsection{Proof of Theorem \ref{th-181225}}

For convenience, we denote $\mu_j^X =\E [X^j]$ for $j\ge 1$, and set $\mu_i=\mu_i^X =\mu_i^Y$ for $i=1,\ldots, k$. We will prove the desired result \eqref{eq-181228-8} by contradiction. Assume on the contrary that \eqref{eq-181228-8} does not hold, that is,
\begin{equation}
  \label{eq-181228-9}
      (-1)^{k+1}\, \mu^X_{k+1} > (-1)^{k+1}\, \mu^Y_{k+1}.
\end{equation}
We consider two cases based on whether $n$ is odd or even.

First, assume $n$ is odd. In this case, let $D_n^X (x)$ and $D_n^Y (x)$ be the right upper asymptotes of $F_X^{[n]}(x)$ and $F_Y^{[n]}(x)$ as $x$ goes to infinity, respectively. From \eqref{eq-181228-6}, it follows that
\begin{align*}
  D_n^X (x) &= \frac {1}{(n-1)!} \left [\sum^k_{j=0} (-1)^j {n-1\choose j} \mu_j x^{n-1-j} + \sum^{n-1}_{j=k+1} (-1)^j {n-1\choose j} \mu_j^X x^{n-1-j}\right ],\\
  D_n^Y (x) &= \frac {1}{(n-1)!} \left [\sum^k_{j=0} (-1)^j {n-1\choose j} \mu_j x^{n-1-j} + \sum^{n-1}_{j=k+1} (-1)^j {n-1\choose j} \mu_j^Y x^{n-1-j}\right ].
\end{align*}
Then, as $x\to\oo$,
\begin{align*}
  D_n^X (x) - D_n^Y (x) & = \frac {1}{(n-1)!} \sum^{n-1}_{j=k+1} (-1)^j {n-1\choose j} (\mu_j^X -\mu_j^Y) x^{n-1-j}\\
      &= \frac {1}{(n-1)!} (-1)^{k+1} {n-1\choose k+1} \(\mu_{k+1}^X -\mu_{k+1}^Y\) x^{n-k-2} \left [1+ \circ (1) \right ]
      \longrightarrow \eta_k>0,
\end{align*}
where
\begin{eqnarray*}
   \eta_k = \left \{\begin{array}{ll} \dfrac {1}{(n-1)!} {n-1\choose k+1}  (-1)^{k+1} \(\mu_{k+1}^X -\mu_{k+1}^Y\) > 0, & k=n-2,\\
            +\oo, & k<n-2.   \end{array}   \right.
\end{eqnarray*}
Thus, there exist $\epsilon_0>0$ and $x_0\in\R$ such that $D_n^X (x)- D_n^Y (x)>\epsilon_0$ and $ D_n^X (x)- F_X^{[n]}(x) < \epsilon_0$ for all $x>x_0$. So, we have
$$
   F_X^{[n]}(x)> D_n^X (x)-\epsilon_0 > D_n^Y (x)\ge F_Y^{[n]}(x),\quad x>x_0,
$$
which contradicts with $X\ge_n Y$. Therefore, \eqref{eq-181228-8} holds when $n$ is odd.

Next, assume $n$ is even. We consider the right lower asymptotes $D_n^X (x)$ and $D_n^Y (x)$ of $F^{[n]}_X(x)$ and $F_Y^{[n]}(x)$, respectively. Applying \eqref{eq-181228-4} yields that, as $x\to\oo$,
\begin{align*}
  C_n^X (x) - C_n^Y (x) &= \frac {1}{(n-1)!} \sum^{n-1}_{j=k+1} (-1)^j {n-1\choose j} \(\mu_j^X -\mu_j^Y\) x^{n-1-j}\\
  &= \frac {1}{(n-1)!} (-1)^{k+1} {n-1\choose k+1} \(\mu_{k+1}^X -\mu_{k+1}^Y\) x^{n-k-2} \left [1+\circ(1) \right ]
  \longrightarrow \eta_k>0.
\end{align*}
Thus, there exist $\epsilon_1>0$ and $x_1\in\R$ such that $C_n^X (x)- C_n^Y (x) >\epsilon_1$ and $F_Y^{[n]}(x) - C_n^Y (x)< \epsilon_1$ for all $x>x_1$. So, we have
$$  F_X^{[n]}(x)\ge C_n^X (x) > C_n^Y (x)+\epsilon_1> F_Y^{[n]}(x),\quad x>x_1,   $$
contradicting $X\ge_n Y$. Therefore, \eqref{eq-181228-8} holds when $n$ is odd. This completes the proof of
Theorem \ref{th-181225}. \ $\square$

\subsection{Proof of Theorem \ref{th-181235}}

We use a similar idea to that in the proof of Theorem \ref{th-251227}). Note that $X>_n Y$ implies that $F_X^{[n]}(x)\le F_Y^{[n]}(x)$ for all $x\in\R$, and there exists a point $x_0\in\R$ such that $F_Y^{[n]}(x_0)-F_X^{[n]}(x_0)=2\alpha>0$. Since $F_X^{[n]}(x)$ is continuous in $x$, there exists an interval $[x_0, x_1]$, $x_0<x_1$, such that $F_Y^{[n]}(x) \ge F_X^{[n]}(x) +\alpha$ for all $x\in [x_0, x_1]$. Thus, for $x>x_1$,
\begin{equation}
 \label{eq-181229-5}
   F_Y^{[n+1]}(x)-F_X^{[n+1]}(x)  \ge  \int^{x_1}_{x_0} \big [F_Y^{[n]}(t)-F_X^{[n]}(t)\big ]\,\d t\ge  \alpha (x_1-x_0)>0.
\end{equation}
Now we prove \eqref{eq-181229-4} by the way of contradiction. Assume on the contrary that \eqref{eq-181229-4} does not hold, that is,
$
   (-1)^{n-1} \mu^X_n \le (-1)^{n-1}\mu^Y_n.
$
Consider the following two cases.

First, assume $n$ is even. Note that $D^X_{n+1}(x)$ and $D_{n+1}^Y(x)$ are the right upper asymptotes of $F^{[n+1]}(x)$ and $G^{[n+1]}(x)$, respectively. Also,
\begin{align*}
    D_{n+1}^X (x) &= \frac {1}{n!} \E [(X-x)^n] =\frac {1}{n!} \bigg [\mu_n^X +\sum^{n-1}_{j=0} {n\choose j} \mu^X_j (-1)^{n-j} x^{x-j}\bigg ]   \\[3pt]
    &\ge \frac {1}{n!} \bigg [\mu_n^Y +\sum^{n-1}_{j=0} {n\choose j}\mu_j^Y (-1)^{n-j} x^{x-j}\bigg ]
    =\frac {1}{n!}\, \E [(Y-x)^n] = D_{n+1}^Y(x).
\end{align*}
For any $0<\epsilon<\alpha (x_1-x_0)$, we have $F_X^{[n+1]}(x)> D_{n+1}^X(x)-\epsilon \ge D_{n+1}^Y (x)-\epsilon \ge F_Y^{[n+1]}(x)-\epsilon$ when $x$ is large enough. This contradicts \eqref{eq-181229-5}. We thus prove \eqref{eq-181229-4} when $n$ is even.

Next, assume $n$ is odd. Note that $C_{n+1}^X(x)$ and $C_{n+1}^Y(x)$ are right lower asymptotes of $F_X^{[n+1]}(x)$ and $F_Y^{[n+1]}(x)$, respectively. Similarly, we have
\begin{align*}
    C_{n+1}^X (x) &= \frac {1}{n!} \bigg [-\mu_n^X -\sum^{n-1}_{j=0} {n\choose j} \mu_{Y,j} (-1)^{n-j} x^{n-j}\bigg ]   \\[3pt]
    &\ge \frac {1}{n!} \bigg [-\mu_n^Y -\sum^{n-1}_{j=0} {n\choose j} \mu_j^Y (-1)^{n-j} x^{n-j}\bigg ]
    = - \frac {1}{n!} \E [(Y-x)^n] = C_{n+1}^Y (x).
\end{align*}
For any $0<\epsilon<\alpha (x_1-x_0)$, we have $F_X^{[n+1]}(x)\ge C_{n+1}^X(x)\ge C_{n+1}^Y(x) \ge F_Y^{[n+1]}(x)-\epsilon
$ when $x$ is large enough. This also contradicts \eqref{eq-181229-5}. We thus prove \eqref{eq-181229-4} when $n$ is odd. \ \qed


\begin{thebibliography}{}
\small

\bibitem[\protect\citeauthoryear{Aaberge et al.}{2021}]{AHM21}
Aaberge, R., Havnes, T. and Mogstad, M. (2021). Ranking intersecting distribution functions. \emph{Journal of Applied Econometrics}, \textbf{36}(6), 639-662.

\bibitem[\protect\citeauthoryear{Andreoli}{2018}]{And18}
Andreoli, F. (2018). Robust inference for inverse stochastic dominance. \emph{Journal of Business and Economic Statistics}, \textbf{36}(1), 146-159.


\bibitem[\protect\citeauthoryear{Carletti and Pellerey}{1998}]{CP98}
Carletti, M. and Pellerey, F. (1998). A new necessary condition for higher order stochastic dominances with applications. {\em Ricerche di Matematica}, {\bf 47}, 373-381.


\bibitem[\protect\citeauthoryear{De La Cal and C\'{a}rcamo}{2010}]{DC10}
De La Cal, J. and C\'{a}rcamo, J. (2010). Inverse stochastic dominance, majorization, and mean order statistics. \emph{Journal of Applied Probability}, \textbf{47}, 277-292.

\bibitem[\protect\citeauthoryear{Delbaen}{2012}]{Delb12}
Delbaen, F. (2012). \emph{Monetary Utility Functions}. Osaka University Press, Osaka.

\bibitem[\protect\citeauthoryear{Eeckhoudt et al.}{2009}]{EST09}
Eeckhoudt, L., Schlesinger, H., Tsetlin, I. (2009). Apportioning of risks via stochastic dominance. \emph{Journal of Economic Theory}, \textbf{144}(3), 994-1003.

\bibitem[\protect\citeauthoryear{Fishburn}{1976}]{Fish76}
Fishburn, P.C. (1976). Continua of stochastic dominance relations for bounded probability distributions. \emph{Journal of Mathematical Economics}, \textbf{3}, 295-311.

\bibitem[\protect\citeauthoryear{Fishburn}{1980a}]{Fish80a}
Fishburn, P.C. (1980a). Continua of stochastic dominance relations for unbounded probability distributions. \emph{Journal of Mathematical Economics}, \textbf{7}, 271-285.

\bibitem[\protect\citeauthoryear{Fishburn}{1980b}]{Fish80b}
Fishburn, P.C. (1980b). Stochastic dominance and moments of distributions. {\em Mathematics of Operations Research}, {\bf 5}, 94-100.

\bibitem[\protect\citeauthoryear{F\"{o}llmer and Schied}{2011}]{FS11}
F\"{o}llmer, H. and Schied, A. (2011). \emph{Stochastic Finance: An Introduction in Discrete Time}. Walter de Gruyter, Berlin, Third Edition.

\bibitem[\protect\citeauthoryear{Gotoh and Konno}{2000}]{GK00}
Gotoh, J. and Konno, H. (2000). Third degree stochastic dominance and mean-risk analysis. {\em Management Science}, {\bf 46}, 289-301.

\bibitem[\protect\citeauthoryear{Jean}{1980}]{Jean80}
Jean, W.H. (1980). The geometric mean and stochastic dominance. \emph{Journal of Finance}, \textbf{35}(1), 151-158.

\bibitem[\protect\citeauthoryear{Jean and Helms}{1988a}]{JH88a}
Jean, W.H. and Helms, B.P. (1988a). Necessary conditions for stochastic dominance: a general approach. \emph{Applied Stochastic Models and Data Analysis}, \textbf{4}, 89-99.

\bibitem[\protect\citeauthoryear{Jean and Helms}{1988b}]{JH88b}
Jean, W.H. and Helms, B.P. (1988b). Moment orderings and stochastic dominance tests. \emph{Journal of
Business Finance and Accounting}, \textbf{15}, 573-584.

\bibitem[\protect\citeauthoryear{Jean and Helms}{1988c}]{JH88c}
Jean, W.H. and Helms, B.P. (1988c). The identification of stochastic dominance efficient sets by moment combination orderings. \emph{Journal of Banking and Finance}, \textbf{12}, 243-253.

\bibitem[\protect\citeauthoryear{Jiang et al.}{2024}]{JSH24}
Jiang, H., Sun, Z. and Hu, S. (2024). A nonparametric test of $m$th-degree inverse stochastic dominance. \emph{Economics Letters}, \textbf{244}, article no. 111978.

\bibitem[\protect\citeauthoryear{Kaas et al.}{1994}]{KHG94}
Kaas, R., van Heerwaarden, A.E. and Goovaerts, M.J. (1994). {\em Ordering of Actuarial Risks}, Education Series 1, CAIRE Brussels.

\bibitem[\protect\citeauthoryear{Levy}{2016}]{Levy16}
Levy, H. (2016). \emph{Stochastic Dominance} (Third edition), Springer, New York.

\bibitem[\protect\citeauthoryear{Maccheroni et al.}{2005}]{MMZ05}
Maccheroni, F., Muliere, P. and Zoli, C. (2005). Inverse stochastic orders and generalized Gini functionals. \emph{Metron}, \textbf{63}, 529-559.

\bibitem[\protect\citeauthoryear{Markowitz}{1952}]{Mark52}
Markowitz, H. (1952). Portfolio selection. \emph{Journal of Finance}, \textbf{7}(1), 77-91.

\bibitem[\protect\citeauthoryear{Mattner}{2004}]{Mat04}
Mattner, S. (2004). Cumulants are universal homomorphisms into Hausdorff groups. \emph{Probability Theory and Related Fields}, \textbf{130}(2): 151-166.

\bibitem[\protect\citeauthoryear{Muliere and Scarsini}{1989}]{MS89}
Muliere, P. and Scarsini, M. (1989). A note on stochastic dominance and inequality measures. \emph{Journal of Economic Theory}, \textbf{49}(2), 314-323.
	
\bibitem[\protect\citeauthoryear{M\"{u}ller and Stoyan}{2002}]{MS02}
M\"{u}ller, A. and Stoyan, D. (2002). \emph{Comparison Methods for Stochastic Models and Risks}. John Wiley \& Sons, Chichester.

\bibitem[\protect\citeauthoryear{O'Brien}{1984}]{Obr84}
O'Brien, G.L. (1984). Stochastic dominance and moment inequalities. \emph{Mathematics of Operations Research}, \textbf{9}(3), 475-477.

\bibitem[\protect\citeauthoryear{Ogryczak and Ruszczy\'{n}ski}{1999}]{OR99}
Ogryczak, W. and Ruszczy\'{n}ski, A. (1999). From stochastic dominance to mean-risk models: Semideviations as risk measures. {\em European Journal of Operational Research}, {\bf 116}, 33-50.

\bibitem[\protect\citeauthoryear{Ogryczak and Ruszczy\'{n}ski}{2001}]{OR01}
Ogryczak, W. and Ruszczy\'{n}ski, A. (2001). On consistency of stochastic dominance and mean-semideviation models. \emph{Mathematical Programming} (Ser. B), \textbf{89}(2), 217-232.

\bibitem[\protect\citeauthoryear{Pomatto et al.}{2020}]{PST20}
Pomatto, L., Strack, P. and Tamuz, O. (2020). Stochastic dominance under independent noise. \emph{Journal of Political Economy}, \textbf{128}(5), 1877-1900.

\bibitem[\protect\citeauthoryear{Rachev and Stoyanov}{2008}]{RS08}
Rachev, S. T. and Stoyanov, S. (2008). \emph{Advanced Stochastic Models, Risk Assessment, and Portfolio Optimization: The Ideal Risk, Uncertainty, and Performance Measures}. Chichester: Wiley.

\bibitem[\protect\citeauthoryear{Rolski}{1976}]{Rol76}
Rolski, T. (1976). \emph{Order relations in the set of probability distribution functions and their applications in queueing
theory}. PhD thesis, Polska Akademia Nauk, Instytut Matematyczny.

\bibitem[\protect\citeauthoryear{Schmid}{2005}]{Schm05}
Schmid, F. (2005). A note on third degree stochastic dominance. {\em OR Spectrum}, {\bf 27}, 653-655.

\bibitem[\protect\citeauthoryear{Shaked and Shanthikumar}{2007}]{SS07}
Shaked, M. and Shanthikumar, J.G. (2007). \emph{Stochastic Orders}. Springer, New York.

\bibitem[\protect\citeauthoryear{Shorrocks and Foster}{1987}]{SF87}
Shorrocks, A.F. and Foster, J.E. (1987). Transfer sensitive inequality measures. \emph{Review of Economic Stduies}, \textbf{54}, 485-497.

\bibitem[\protect\citeauthoryear{Thistle}{1993}]{This93}
Thistle, P.D. (1993). Negative moments, risk aversion, and stochastic dominance. \emph{Journal of Financial and Quantitative Analysis}, \textbf{2}, 301-311.

\bibitem[\protect\citeauthoryear{Wang and Wu}{2025}]{WW25}
Wang, R. and Wu, Q. (2025). The reference interval in higher-order stichastic dominance. \emph{Economic Theory Bulletin}, \textbf{13}, 263-277.

\bibitem[\protect\citeauthoryear{Wang and Young}{1998}]{WY98}
Wang, S. and Young, V. (1998). Ordering risks: Expected utility theory versus Yaari's dual theory of risk. \emph{Insurance: Mathematics and Economics}, \textbf{22}(2), 145-161.

\bibitem[\protect\citeauthoryear{Whang}{2019}]{Wha19}
Whang, Y.-J. (1998). \emph{Econometric Analysis of Stochastic Dominance: Conceptes, Methods, Tools, and Applications}. Chambridge University Press, United Kingdom.

\bibitem[\protect\citeauthoryear{Zoli}{1999}]{Zoli99}
Zoli, C. (1999). Intersecting generalized Lorenz curves abd the Gini index. \emph{Social Choice and Welfare}, \textbf{16}, 183-196.

\bibitem[\protect\citeauthoryear{Zoli}{2002}]{Zoli02}
Zoli, C. (2002). Inverse stochastic dominance, inequality measurement and Gini indices. \emph{Journal of Economics}, \textbf{77}, 119-161.

\bibitem[\protect\citeauthoryear{Zou and Hu}{2024}]{ZH24}
Zou, Z. and Hu, T. (2024). Adjusted higher-order expected shortfall. \emph{Insurance: Mathematics and Economics}, \textbf{115}, 1-12.

\end{thebibliography}
\end{document}